\newtheorem{theorem}{Theorem}[section]
\newtheorem{lemma}[theorem]{Lemma}
\newtheorem{proposition}[theorem]{Proposition}
\newtheorem{corollary}[theorem]{Corollary}
\newtheorem{definition}[theorem]{Definition}
\newtheorem{notation}[theorem]{Notation}
\theoremstyle{definition}
\newtheorem{Remark}[theorem]{Remark}
\newtheorem{Example}[theorem]{Example}
\newcommand{\opn}{{\mathcal{O}_{\mathbb{P}^n}}}
\newcommand{\pn}{{\mathbb{P}^n}}
\newcommand{\CC}{{\mathbb C}}
\newcommand{\ZZ}{{\mathbb Z}}
\newcommand{\PP}{{\mathbb P}}
\newcommand{\GG}{{\mathbb G}}
\newcommand{\A}{{\mathcal A}}
\newcommand{\OO}{{\mathcal O}}
\newcommand{\Pic}{\operatorname{Pic}}
\newcommand{\rk}{\operatorname{rk}}
\newcommand{\Hom}{\operatorname{Hom}}
\newcommand{\im}{\operatorname{im}}
\newcommand{\codim}{\operatorname{codim}}
\newcommand{\coker}{\operatorname{coker}}
\newcommand{\Fitt}{\operatorname{Fitt}}
\newcommand{\Ann}{\operatorname{Ann}}
\newcommand{\h}[1]{\-\mbox{-#1}}
\newlength\linhaxy\setlength{\linhaxy}{1.5em}
\newlength\linhaxylabels\setlength{\linhaxylabels}{2em}
\newlength\colunaxy\setlength{\colunaxy}{1.5em}
\newlength\colunaxylabels\setlength{\colunaxylabels}{2em}
\newcommand{\morphismtopn}{\varphi}
\title{Monads on projective varieties}
\author{Simone Marchesi}
\address{Instituto de Matem\'{a}tica, Estat\'{\i}stica e Computa\c{c}\~{a}o Cient\'{\i}fica, Universidade Estadual de Campinas, Rua S\'{e}rgio Buarque de Holanda, 651, Cidade Universit\'{a}ria ``Zeferino Vaz'', Distrito Bar\~{a}o Geraldo, Campinas, CEP 13083--859, S\~{a}o Paulo, Brasil}
\email{marchesi@ime.unicamp.br}
\author{Pedro Macias Marques}
\address{Departamento de Matem\'{a}tica, Escola de Ci\^{e}ncias e Tecnologia, Centro de Investiga\c{c}\~{a}o em Mate\-m\'{a}\-tica e Aplica\c{c}\~{o}es, Instituto de Investiga\c{c}\~{a}o e Forma\c{c}\~{a}o Avan\c{c}ada, Universidade de \'{E}vora, Rua Rom\~{a}o Ramalho, 59, P--7000--671 \'{E}vora, Portugal}
\email{pmm@uevora.pt}
\author{Helena Soares}
\address{Departamento de Matem\'{a}tica, Instituto Universit\'{a}rio de Lisboa (ISCTE - IUL), UNIDE (BRU - Business Research Unit), Av.\ das For\c{c}as Armadas, 1649--026 Lisboa, Portugal}
\email{helena.soares@iscte.pt}
\date{}
\subjclass[2010]{Primary 14F05, Secondary 14J10, 14J60}
\keywords{monads, ACM varieties}
\begin{document}
\maketitle

\begin{abstract}
We generalise Fl\o{}ystad's theorem on the existence of monads on the projective space to a larger set of projective varieties. We consider a variety $X$, a line bundle $L$ on $X$, and a base\h{point}\h{free} linear system of sections of $L$ giving a morphism to the projective space whose image is either arithmetically Cohen\h{Macaulay} (ACM), or linearly normal and not contained in a quadric. We give necessary and sufficient conditions on integers $a$, $b$, and $c$ for a monad of type
\begin{equation*}
\xymatrix@R=\linhaxy@C=\colunaxy{0\ar[r]&(L^\vee)^a\ar[r]&\OO_{X}^{\,b}\ar[r]&L^c\ar[r]&0}
\end{equation*}
to exist. We show that under certain conditions there exists a monad whose cohomology sheaf is simple. We furthermore characterise low\h{rank} vector bundles that are the cohomology sheaf of some monad as above.

Finally, we obtain an irreducible family of monads over the projective space and make a description on how the same method could be used on an ACM smooth projective variety $X$. We establish the existence of a coarse moduli space of low\h{rank} vector bundles over an odd\h{dimensional} $X$ and show that in one case this moduli space is irreducible.
\end{abstract}

\section{Introduction}

A \emph{monad} over a projective variety $X$ is a complex
\[
M_\bullet\colon
\xymatrix@R=\linhaxylabels@C=\colunaxylabels{0\ar[r] &A\ar[r]^f &B\ar[r]^g &C\ar[r] &0}
\]
of morphisms of coherent sheaves on $X$, where $f$ is injective and $g$ is surjective. The coherent sheaf ${E:=\ker g / \im f}$ is called the \textit{cohomology sheaf} of the monad $M_\bullet$. This is one of the simplest ways of constructing sheaves, after kernels and cokernels.

The first problem we need to tackle when studying monads is their existence. Fl\o{}ystad gave sufficient and necessary conditions for the existence of monads over the projective space whose maps are given by linear forms, in \cite{Flo00}. In \cite{CMR09}, Costa and Miró\h{Roig} extended this result to smooth quadric hypersurfaces of dimension at least three, and in \cite{Jar07}, Jardim made a further generalisation to any hypersurface in the projective space. We can find additional partial results on the existence of monads in the literature, by means of construction of examples of monads over other projective varieties (for instance, blow\h{ups} of the projective plane in \cite{Buc04}, Abelian varieties in \cite{Gul13}, Fano threefolds in \cite{Fae14} and \cite{Kuz12}, complete intersection, Calabi\h{Yau} threefolds in \cite{HJ13}, and Segre varieties in \cite{MMS14}). In \cite{JMR08} the authors expressed the wish of having a generalisation of the results on existence by Fl\o{}ystad and by Costa and Miró\h{Roig} to varieties other than the projective space and quadric hypersurfaces. Here we generalise Fl\o{}ystad's theorem to a larger set of projective varieties. We let $X$ be a variety of dimension $n$ and $L$ be a line bundle on $X$. We consider a linear system ${V\subseteq H^0(L)}$, without base points, defining a morphism ${\morphismtopn:X\to\PP(V)}$  and suppose that its image ${X'\subset\PP(V)}$ is arithmetically Cohen\h{Macaulay} (ACM) (see Definition~\ref{ACMdefn} and Theorem~\ref{maintheorem}) or linearly normal and not contained in a quadric hypersurface (Theorem~\ref{notcutbyquadricstheorem}). Then we give necessary and sufficient conditions on integers $a$, $b$, and $c$ for a monad of type
\begin{equation}\label{genmonad0}\tag{M}
\xymatrix@R=\linhaxy@C=\colunaxy{0\ar[r]&(L^\vee)^a\ar[r]&\OO_{X}^{\,b}\ar[r]&L^c\ar[r]&0}
\end{equation}
to exist.

Once existence of monads over a variety $X$ is proved, we can study their cohomology sheaf. One of the most interesting questions to ask is whether this sheaf is stable and this has been established in special cases (see \cite{AO94} and \cite{JMR08} for instance). Since stable sheaves are simple, a common approach is to study simplicity (in \cite{CMR09} the authors show that any mathematical instanton bundle over an odd\h{dimensional} quadric hypersurface is simple, and in particular that it is stable over a quadric threefold). We show that under certain conditions, in the case when $X'$ is ACM, there exists a monad of type \eqref{genmonad0} whose cohomology sheaf is simple (Proposition~\ref{simpleprop}).

As we said, monads are a rather simple way of obtaining new sheaves. When the sheaf we get is locally\h{free}, we may consider its associated vector bundle, and by abuse of language we will not distinguish between one and the other. There is a lot of interest in low\h{rank} vector bundles over a projective variety $X$, i.e.\ those bundles whose rank is lower than the dimension of $X$, mainly because they are very hard to find. We characterise low\h{rank} vector bundles that are the cohomology sheaf of a monad of type \eqref{genmonad0} (Theorem~\ref{lowerrankth}).

Finally, we would like to be able to describe families of monads, or of sheaves coming from monads. There has been much work done on this since the nineties. Among the properties studied on these families is irreducibility (see for instance \cite{Tik12} and \cite{Tik13} for the case of instanton bundles over the projective space). Here we obtain an irreducible family of monads over the projective space (Theorem \ref{theorem2}),  and make a description on how the same method could be used on another ACM projective variety. Furthermore, we establish the existence of a coarse moduli space of low\h{rank} vector bundles over an odd\h{dimensional}, ACM projective variety (Theorem \ref{modulispace}), and show that in one case this moduli space is irreducible (Corollary \ref{irreduciblemodulispace}).

\bigskip

\noindent\textbf{Acknowledgements.}
The authors wish to thank Enrique Arrondo, Laura Costa, Marcos Jardim, Rosa María Miró\h{Roig}, and Daniela Prata for fruitful discussions. We would also like to thank the referee for the useful comments and observations.

We would like to thank the Universidade Estadual de Campinas (IMECC-UNICAMP) for the hospitality and for providing the best working conditions. The second author also wishes to thank Northeastern University and KU Leuven for their hospitality.

The first author was partially supported by Funda\c{c}\~{a}o de Amparo \`{a} Pesquisa do Estado de S\~{a}o Paulo (FAPESP), grant 2017/03487-9. The second and third authors were partially supported by Funda\c{c}\~{a}o para a Ci\^{e}ncia e Tecnologia (FCT), project ``Comunidade Portuguesa de Geometria Alg\'{e}brica'', PTDC/MAT-GEO/0675/2012. The second author was also partially supported by FCT sabbatical leave grant SFRH/BSAB/1392/2013, by CIMA -- Centro de Investiga\c{c}\~{a}o em Matem\'{a}tica e Aplica\c{c}\~{o}es, Universidade de \'{E}vora, project PEst-OE/MAT/UI0117/2014, and by FAPESP Visiting Researcher Grant 2014/12558--9. The third author was also partially supported by FCT sabbatical leave grant SFRH/BSAB/105740/2014, by BRU - Business Research Unit, Instituto Universit\'{a}rio de Lisboa (ISCTE - IUL), and by FAPESP Visiting Researcher Grant 2014/00498-1.

\section{Monads over ACM varieties}

Let $X$ be a projective variety of dimension $n$ over an algebraically closed field $k$, let $L$ be a line bundle on $X$, and let ${V\subseteq H^0(L)}$ yield a linear system without base points, defining a morphism ${\morphismtopn:X\to\PP(V)}$. Our main goal is to study monads over $X$ of type
\[
\xymatrix@R=\linhaxy@C=\colunaxy{0\ar[r] &(L^\lor)^a\ar[r]&\OO_X^{\,b} \ar[r] &L^c\ar[r] &0.}
\]
In this section we recall the concept of monad, as well as the results that were the starting point for the present paper, i.e.\ Fl{\o}ystad's work regarding the existence of monads on the projective space (\cite{Flo00}).

Let us first fix the notation used throughout the paper.

\begin{notation}
Let $Y\subseteq\PP^N$ be a projective variety of dimension $n$ over an algebraically closed field $k$. Let $R_Y$ be the homogeneous graded coordinate ring of $Y$ and $\mathcal{I}_{Y/\PP^N}$ its ideal sheaf.

If $\mathcal{E}$ is a coherent sheaf over $Y$ we will denote its dual by $\mathcal{E}^\lor$. We also denote the graded module ${H^i_*(Y,\mathcal{E})=\oplus_{m\in\ZZ} H^i\big(Y,\mathcal{E}(m)\big)}$ and ${h^i(\mathcal{E})=\dim H^i(Y,\mathcal{E})}$.

Given any $k$-vector space $V$, we will write $V^*$ to refer to its dual.
\end{notation}

\begin{definition}\label{ACMdefn}
Let $Y$ be a projective variety of dimension $n$ over an algebraically closed field $k$. We say that $Y$ is arithmetically Cohen\h{Macaulay} (ACM) if its graded coordinate ring $R_Y$ is a Cohen\h{Macaulay} ring.
\end{definition}

\begin{Remark}
If ${Y\subseteq\PP^N}$ is a projective variety then being ACM is equivalent to the following vanishing:
\[
H^1_*\big(\PP^N,\mathcal{I}_{Y/\PP^N}\big)=0, \quad H^i_*(Y,\OO_Y)=0, \quad 0<i<n.
\]
Moreover, we note that the notion of ACM variety depends on the embedding.
\end{Remark}

The first problem we will address concerns the existence of monads on projective varieties (see Section \ref{SectionExistence}) and the generalisation of the following result.

\begin{theorem}[\cite{Flo00}, Main Theorem and Corollary 1]\label{TeoFlo}
Let $N\geq 1$. There exists a monad of type
\begin{equation}\label{monadFlo}
\xymatrix@R=\linhaxylabels@C=\colunaxylabels{0\ar[r]&\OO_{\PP^N}(-1)^a\ar[r]^-{f}&\OO_{\PP^N}^{\,b}\ar[r]^-{g}&\OO_{\PP^N}(1)^c\ar[r]&0}
\end{equation}
if and only if one of the following conditions holds:
\begin{enumerate}[(i)]
\item  $b \geq a + c$ and $b \geq 2c+N-1$, \label{monadFlo1}
\item  $b \geq a+c+N$. \label{monadFlo2}
\end{enumerate}
If so, there actually exists a monad with the map $f$ degenerating in expected codimension $b-a-c+1$.

If the cohomology of the monad \eqref{monadFlo} is a vector bundle of rank $<N$ then $N=2l+1$ is odd and the monad has the form
\begin{equation}\label{monadInst}
\xymatrix@R=\linhaxy@C=\colunaxy{0\ar[r]&\OO_{\PP^{2l+1}}(-1)^c\ar[r]&\OO_{\PP^{2l+1}}^{\,2l+2c} \ar[r]&\OO_{\PP^{2l+1}}(1)^c\ar[r]&0.}
\end{equation}
Conversely, for every ${c, l\geq 0}$ there exist monads of type \eqref{monadInst} whose cohomology is a vector bundle.
\end{theorem}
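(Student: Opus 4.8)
The plan is to treat the four assertions in turn, working throughout with the \emph{display} of the monad \eqref{monadFlo}: writing $K=\ker g$, surjectivity of $g$ makes $K$ a vector bundle of rank $b-c$ fitting in $0\to K\to\OO_{\PP^N}^{\,b}\to\OO_{\PP^N}(1)^c\to0$, while $f$ factors as a (generic) injection $\OO_{\PP^N}(-1)^a\hookrightarrow K$ with cohomology $E=K/\OO_{\PP^N}(-1)^a$. In particular $\rk E=b-a-c$, so $b\ge a+c$ is necessary; and the locus where $f$ drops rank is the degeneracy locus of a map of bundles of ranks $a$ and $b-c$, of expected codimension $b-a-c+1$. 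This already pins down the role of condition~\eqref{monadFlo2}, since $b\ge a+c+N$ is exactly what forces this locus to be empty and hence $E$ locally free.

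For the remaining necessity I would restrict to a general line $\ell\cong\PP^1$: the splitting type of $K|_\ell$ is constrained both by $K|_\ell\hookrightarrow\OO_\ell^{\,b}$ (forcing nonpositive summands) and by the existence of an injection from $\OO_\ell(-1)^a$ (forcing enough summands of degree $-1$), which yields $b\ge 2c$; combining this with the projective dimension theorem applied to the rank\h{degeneracy} locus of $c\times b$ matrices of linear forms — whereby a net of such matrices avoids the rank\h{$<c$} stratum, i.e.\ $g$ is everywhere surjective, only if $b\ge c+N$ — one sharpens the bounds to $b\ge 2c+N-1$ as in~\eqref{monadFlo1}. For sufficiency I would construct the maps explicitly and invoke a Bertini\h{type} general\h{position} argument for determinantal loci, so that a generic pair $(f,g)$ with $g\circ f=0$ has $f$ degenerating in exactly the expected codimension $b-a-c+1$; the delicate point is the boundary of region~\eqref{monadFlo1}, where minimal monads must be written down by hand and the expected codimension checked directly.

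For the low\h{rank} statement the key observation is that the total Chern class of $E$ depends only on $a$ and $c$: from \eqref{monadFlo} one gets $c(E)=(1-h)^{-a}(1+h)^{-c}$ in $H^*(\PP^N)=\ZZ[h]/(h^{N+1})$, where $h$ is the hyperplane class. If $E$ is locally free of rank $r=b-a-c<N$, then $c_i(E)=0$ for $r<i\le N$, i.e.\ the coefficient of $h^i$ in $(1-h)^{-a}(1+h)^{-c}$ vanishes in that range. If $a>c$ one writes $(1-h)^{-a}(1+h)^{-c}=(1-h^2)^{-c}(1-h)^{-(a-c)}$, a power series with strictly positive coefficients, so no $c_i$ vanishes and $r\ge N$, a contradiction; the substitution $h\mapsto-h$ rules out $a<c$ symmetrically. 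Hence $a=c$ and $c(E)=(1-h^2)^{-c}$ is even, so all odd Chern classes vanish automatically while the even ones $\binom{c+k-1}{k}$ never do. The required vanishing for $r<i\le N$ then forces every integer in that range to be odd, whence $r=N-1$ and $N=2l+1$ is odd; substituting $b=r+a+c=2l+2c$ recovers precisely the form~\eqref{monadInst}.

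Finally, for the converse I must exhibit, for all $c,l\ge0$, a monad~\eqref{monadInst} with locally free cohomology. Here the expected codimension of the degeneracy locus of $f$ equals $b-a-c+1=N$, so a \emph{generic} such monad has a nonempty zero\h{dimensional} degeneracy locus and non\h{locally}\h{free} cohomology; the point is therefore to choose $f$ and $g$ \emph{non}generically so that this locus is empty. I expect this to be the main obstacle of the whole statement: it calls for an explicit, structured construction — for instance a symplectic/ADHM\h{type} normal form for $f$ and $g$ — together with a direct verification that the resulting bundle map is nowhere degenerate, and an induction on $c$ by adjoining elementary blocks to pass from one value to the next. By contrast, the necessity analysis is comparatively routine once the display and the projective dimension theorem are in place; it is the construction of the boundary monads of~\eqref{monadFlo1} and of the instanton monads~\eqref{monadInst} that carries the real weight.
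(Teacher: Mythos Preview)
This theorem is quoted from Fl\o{}ystad \cite{Flo00} and is not proved in the present paper; the relevant comparison is with the paper's generalisations (Lemma~\ref{existencelemma}, Theorem~\ref{maintheorem}, Theorem~\ref{lowerrankth}), whose proofs reproduce Fl\o{}ystad's arguments in the broader setting.

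Your necessity argument for conditions \eqref{monadFlo1}--\eqref{monadFlo2} has a genuine gap. From the surjectivity of $g$ you correctly extract $b\ge c+N$ (the Chern classes of $K^\vee$ do not vanish below degree $N$), and restriction to a general line yields at best the $\PP^1$ alternative ``$b\ge 2c$ or $b\ge a+c+1$''. But these do \emph{not} combine to the sharp statement. Take $N=2$, $a=1$, $c=3$, $b=5$: one has $b\ge a+c$, $b\ge c+N$, and $b\ge a+c+1$, yet neither \eqref{monadFlo1} nor \eqref{monadFlo2} holds, and indeed no such monad exists. The ``sharpening'' you invoke is precisely where the content lies, and it is not a projective\h{dimension}\h{theorem} argument. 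Fl\o{}ystad's proof --- reproduced in the paper's Theorem~\ref{maintheorem} --- instead chooses a general $(c+N-1)$\h{dimensional} subspace $U\subset H^0(\OO^{\,b})$ so that the induced $p:U\otimes S\to S(1)^c$ degenerates in the expected codimension $N$, and compares Fitting ideals: one shows $\Fitt_1(\coker q)\subset\Fitt_1(\coker p)$ for the complementary map $q:S(-1)^a\to W\otimes S$ with $\dim W=b-c-N+1$, and uses that $\Fitt_1(\coker p)$ is saturated and generated in degrees $\ge c$ (via Cohen--Macaulayness of the determinantal quotient, \cite{BE77b}, \cite{Eis94}) to force $\dim W\ge c$ or $\dim W>a$. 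This Fitting\h{ideal} step is the missing idea in your outline.

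Your low\h{rank} argument, by contrast, is correct and is actually more self\h{contained} than the paper's. In Theorem~\ref{lowerrankth} the paper first feeds both the monad and its dual back into the necessity direction of Theorem~\ref{maintheorem} to obtain $b\ge 2c+n-1$ and $b\ge 2a+n-1$, which together with $b\le a+c+n-1$ pin down $a=c$ and $b=2c+n-1$; only then is the Chern polynomial $(1-l^2t^2)^{-c}$ invoked to exclude even $n$. You instead read $a=c$ directly from the positivity of the coefficients of $(1-h^2)^{-\min(a,c)}(1\mp h)^{-|a-c|}$, which avoids the backward reference. Both routes finish identically. For sufficiency and for the instanton converse you are right that explicit matrices are required; the paper's Lemma~\ref{existencelemma} displays exactly the two\h{block} matrices $X_{c,c+p}$, $Y_{c,c+q}$ built from a splitting of the coordinate set, composed with a generic constant map $\phi$ to reach the desired $a$, with the degeneracy estimate coming from \cite{Flo00}, Lemmas~2 and~3.
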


Observe that the vector bundles which are the cohomology of a monad of the form \eqref{monadInst} are the so\h{called} instanton bundles.

The next construction uses standard techniques of projective geometry and it explains somehow the way we thought Fl\o{}ystad's case could be generalised to other projective varieties.

\bigskip

Let $X'$ be the image of $X$ in $\PP(V)$. Taking ${N=\dim V-1}$, let $\PP^N:=\PP(V)$ and  ${m := \codim_{\PP^N} X'}$. Consider a monad of type \eqref{monadFlo} and take a projective linear subspace ${\Lambda \subset \PP^N}$ of dimension ${m-1}$ such that ${\Lambda \cap X' = \emptyset}$. Fixing coordinates ${z_0,\ldots, z_N}$ in $\PP^N$ we may assume that  ${I(\Lambda)=(z_{0},\ldots,z_{N-m})}$.

Let $A$ and $B$ be the matrices associated to the morphisms $f$ and $g$, respectively, in \eqref{monadFlo}. Consider the induced morphisms $\tilde{f}$ and $\tilde{g}$ whose matrices are, respectively, $\tilde{A}$ and $\tilde{B}$, obtained from $A$ and $B$ by the vanishing of the linear forms that define a linear complement of $\Lambda$, i.e.\ ${\tilde{f}= f_{|\{z_{N-m+1} =\cdots =z_N = 0\}}}$ and ${\tilde{g}= g_{|\{\{z_{N-m+1} =\cdots =z_N = 0\}}}$.

By construction we have $\tilde{B}\tilde{A} = 0$. If $x \in \Lambda$ then the ranks of $\tilde{A}$ and $\tilde{B}$ evaluated at $x$ are no longer maximal, that is, $\rk(\tilde{A})<a$ and $\rk(\tilde{B})<c$. In particular, the complex
\[
\xymatrix@R=\linhaxylabels@C=\colunaxylabels{\OO_{\PP^N}(-1)^a\ar[r]^-{\tilde{f}}&\OO_{\PP^N}^{\,b}\ar[r]^-{\tilde{g}}&\OO_{\PP^N}(1)^c}
\]
is not a monad on $\PP^N$ anymore.
Nevertheless, for a general $x \in X'$, the matrices $\tilde{A}(x)$ and $\tilde{B}(x)$ have maximal rank and hence the complex
\[
\xymatrix@R=\linhaxylabels@C=\colunaxylabels{0\ar[r] &(L^\lor)^a\ar[r]^-{\morphismtopn^*\tilde{f}}&\OO_{X}^{\,b}\ar[r]^{\morphismtopn^*\tilde{g}}&L^c\ar[r] & 0,}
\]
where ${L=\morphismtopn^*\big(\OO_{\PP^N}(1)\big)}$,  is a monad on $X$.

\section{Existence of monads over ACM varieties}\label{SectionExistence}

The aim of this section is to prove two characterisations of the existence of monads on projective varieties. We start by giving sufficient conditions for a monad to exist.

\begin{lemma}\label{existencelemma}
Let $X$ be a variety of dimension $n$, let $L$ be a line bundle on $X$, and let ${V\subseteq H^0(L)}$ be a linear system, with no base points, defining a morphism ${X\to\PP(V)}$. Suppose $a$, $b$, and $c$ are integers such that one of the following conditions holds:
\begin{enumerate}[(i)]
\item  $b \geq a + c$ and $b \geq 2c+n-1$, \label{genmonad1}
\item  $b \geq a+c+n$. \label{genmonad2}
\end{enumerate}
Then there exists a monad of type
\begin{equation}\label{genmonad}\tag{M}
\xymatrix@R=\linhaxylabels@C=\colunaxylabels{0\ar[r]&(L^\lor)^a\ar[r]^-{f}&\OO_{X}^{\,b}\ar[r]^-{g}&L^c\ar[r]&0}
\end{equation}
Moreover, the map $f$ degenerates in expected codimension $b-a-c+1$ and $g$ can be defined by a matrix whose entries are global sections of $L$ that span a subspace of $V$ whose dimension  is ${\min\big(b-2c+2,\dim V\big)}$.
\end{lemma}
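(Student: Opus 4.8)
The plan is to deduce the statement from Fl\o ystad's theorem (Theorem~\ref{TeoFlo}) by building a monad on a smaller projective space and pulling it back to $X$. Set $N=\dim V-1$, so that $\morphismtopn\colon X\to\PP^N=\PP(V)$ with $L=\morphismtopn^*\OO_{\PP^N}(1)$, and let $X'=\morphismtopn(X)$; since $\morphismtopn$ is given by the full system $V$, the image $X'$ is nondegenerate in $\PP^N$, and $\dim V\geq n+1$. Put $d=\min(b-2c+2,\dim V)$. Under condition~(i) one has $b-2c+2\geq n+1$, hence $d\geq n+1$; I would then choose a general linear subspace $\Lambda\subset\PP^N$ of dimension $N-d$, which is disjoint from $X'$ because $(N-d)+\dim X'<N$, and set $\psi=\pi_\Lambda\circ\morphismtopn\colon X\to\PP^{d-1}$, the composition of $\morphismtopn$ with the linear projection from $\Lambda$. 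Then $\psi$ is a morphism and $\psi^*\OO_{\PP^{d-1}}(1)=L$.

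First I would check that Theorem~\ref{TeoFlo} applies on $\PP^{d-1}$ for the same triple $(a,b,c)$. Its first condition, with $N$ replaced by $d-1$, requires $b\geq a+c$ and $b\geq 2c+d-2$: the former holds under either hypothesis, while the latter reads $b\geq b$ when $d=b-2c+2$ and $b>2c+d-2$ when $d=\dim V<b-2c+2$. Thus Theorem~\ref{TeoFlo} yields a monad
\[
0\to\OO_{\PP^{d-1}}(-1)^a\xrightarrow{\;\tilde f\;}\OO_{\PP^{d-1}}^{\,b}\xrightarrow{\;\tilde g\;}\OO_{\PP^{d-1}}(1)^c\to 0
\]
with $\tilde f$ degenerating in codimension $b-a-c+1$. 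Pulling back along $\psi$ produces the complex~\eqref{genmonad}: the relation $\tilde g\tilde f=0$ is preserved, and $g=\psi^*\tilde g$ remains surjective since the pullback of a surjection of sheaves is again a surjection.

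It then remains to analyse $f=\psi^*\tilde f$ and the span of the entries of $g$. Let $D\subset\PP^{d-1}$ be the locus where $\tilde f$ fails to be a subbundle map, of codimension $b-a-c+1$. For general $\Lambda$ the image $\psi(X)$ meets $D$ in the expected dimension, so $\psi^{-1}(D)$ has codimension $b-a-c+1$ in $X$ and the generic point of $X$ maps outside $D$; hence $f$ is injective and degenerates in the asserted codimension. When $b-a-c+1>n$, in particular under condition~(ii), the locus $D$ is disjoint from $\psi(X)$, so $\tilde f$ is a subbundle map along the image and injectivity holds for every $\psi$. As for the span, the entries of $\tilde g$ have empty common zero locus, being the entries of a surjection onto $\OO_{\PP^{d-1}}(1)^c$, so they span the full $d$\h{dimensional} space $H^0\big(\OO_{\PP^{d-1}}(1)\big)$; as $\psi(X)$ is nondegenerate (a projection of the nondegenerate $X'$), the map $\psi^*$ is injective on linear forms, and therefore the entries of $g$ span a $d$\h{dimensional} subspace of $V$, as required.

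The hard part is the passage to condition~(ii) when $b-2c+2<n+1$, which can occur only there and forces $a\leq c-2$. In that range $d\leq n$, so no center $\Lambda$ is disjoint from the $n$\h{dimensional} $X'$ and the projection is unavailable; instead $\psi$ must be realised directly as the morphism attached to a base\h{point}\h{free} subsystem $W\subseteq V$ of dimension $d$. Here $b-a-c+1>n\geq d-1$ forces $D=\emptyset$, so $\tilde f$ is already a subbundle map and no genericity of $\psi$ is needed for injectivity; the delicate point is thus the existence of such a base\h{point}\h{free} $W$ of the prescribed dimension, since the common zeros of $W$ are exactly the points where $g$ fails to be surjective. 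This is precisely where the span formula is tied to the hypotheses, and it is the step I expect to require the most care.
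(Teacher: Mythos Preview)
Your treatment of case~(i) is essentially the paper's argument in compressed form: the paper also chooses a center $\Lambda$ of dimension $N-(b-2c+2)$ disjoint from $X'$ and works on the quotient $\PP^{d-1}$, but instead of invoking Fl\o{}ystad's theorem as a black box it writes down the explicit staircase matrices and traces the degeneracy locus through the chain $\PP^n\subset\PP^N\supset X'$. Your appeal to genericity (``for general $\Lambda$ the image $\psi(X)$ meets $D$ in the expected dimension'') is correct in spirit but deserves one more sentence: varying $\Lambda$ amounts to moving the fixed Fl\o{}ystad monad by an element of $\mathrm{PGL}_d$, so a Kleiman--type transversality argument applies. Your observation that the entries of $\tilde g$ must span all of $H^0\big(\OO_{\PP^{d-1}}(1)\big)$, since otherwise $\tilde g$ would drop rank somewhere, is a clean way to get the span claim and is not made explicit in the paper.

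The real problem is case~(ii) when $d=b-2c+2\leq n$. You correctly identify that the projection is unavailable and propose instead to realise $\psi$ via a base\h{point}\h{free} subsystem $W\subset V$ with $\dim W=d$. But such a $W$ need not exist: a general $d$\h{dimensional} subsystem of $V$ has base locus of expected codimension $d\leq n$, hence nonempty, and for many pairs $(X,L)$ no special choice helps. For instance, on $\PP^1\times\PP^1$ with $L=\OO(1,1)$ any two sections meet, so there is no base\h{point}\h{free} pencil. Thus your last paragraph is not merely ``delicate'' but actually blocked.

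The paper circumvents this entirely by a duality trick you are missing. Under~(ii) with $b<2c+n-1$ one has $c>a+1$, whence $b>2a+n-1$; so case~(i) applies with the roles of $a$ and $c$ exchanged, producing a monad
\[
0\longrightarrow (L^\vee)^c\longrightarrow \OO_X^{\,b}\longrightarrow L^a\longrightarrow 0
\]
whose left map degenerates in codimension $b-a-c+1\geq n+1$, i.e.\ not at all on $X$. Dualising this complex yields the desired monad of type~\eqref{genmonad}. I would replace your final paragraph with this argument.
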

The main ideas of the proof follow Floystad's construction, combined with the projective geometry standard results described at the end of the last section. Observe that, under the hypotheses of the theorem, the existence of a monad \eqref{genmonad} is equivalent to the existence of a monad
\[
\xymatrix@R=\linhaxylabels@C=\colunaxylabels{0\ar[r]&\OO_{X'}(-1)^a\ar[r]^-{f}&\OO_{X'}^{\,b}\ar[r]^-{g}&\OO_{X'}(1)^c\ar[r]&0.}
\]
\begin{proof}
Let ${N=\dim V-1}$ and write $\PP^N$ for $\PP(V)$. Suppose that one of the conditions \eqref{genmonad1} and \eqref{genmonad2} holds. If $b$ is high enough with respect to $a$ and $c$ so that ${b\geq 2c+N-1}$ or ${b \geq a+c+N}$, then by Theorem \ref{TeoFlo}, there is a monad
\[
\xymatrix@R=\linhaxylabels@C=\colunaxylabels{0\ar[r]&\OO_{\PP^N}(-1)^a\ar[r]^-{f}&\OO_{\PP^N}^{\,b}\ar[r]^-{g}&\OO_{\PP^N}(1)^c\ar[r]&0.}
\]
By restricting morphisms $f$ and $g$ to $X'$, we get a monad of type \eqref{genmonad}. So from here on we may assume that  ${b < a+c+N}$ and ${b < 2c+N-1}$.

Suppose first that condition \eqref{genmonad1} in Theorem \ref{maintheorem} is satisfied. Then ${N-1 > b-2c \ge n-1}$, so
\[
0\le N-(b-2c+2)\le N-n-1.
\]
Therefore we can take a projective linear subspace ${\Lambda \subset \PP^N}$ of dimension ${N-(b-2c+2)}$, disjoint from $X'$, and choose linearly independent sections ${z_{0},\ldots,z_{b-2c+1}\in H^0\big(\OO_{X'}(1)\big)}$ such that ${I(\Lambda)=(z_{0},\ldots,z_{b-2c+1})}$. Let us divide the coordinate set $\{z_0,\ldots,z_{b-2c+1}\}$ into two subsets:\linebreak ${x_0,\ldots,x_p}$ and ${y_0,\ldots,y_q}$, with ${\lvert p-q\rvert \leq 1}$ and such that ${b-2c=p+q}$. Define the matrices
\[
X_{c,c+p} = 
\begin{bmatrix}
x_0 & x_1 & \cdots & x_p\\
& x_0 & x_1 & \cdots & x_p\\
& & \ddots & & & \ddots\\
& & & x_0 & x_1 & \cdots & x_p
\end{bmatrix}
\]
and
\[
Y_{c,c+q} =
\begin{bmatrix}
y_0 & y_1 & \cdots & y_q\\
& y_0 & y_1 & \cdots & y_q\\
& & \ddots & & & \ddots\\
& & & y_0 & y_1 & \cdots & y_q
\end{bmatrix},
\]
of sizes $c\times(c+p)$ and $c\times(c+q)$, respectively. Therefore, the matrices
\[
B = 
\begin{bmatrix}
X_{c,c+p} & Y_{c,c+q}
\end{bmatrix}, 
\quad 
A = 
\begin{bmatrix}
Y_{c+p,c+p+q} \\
 - X_{c+q,c+q+p}
\end{bmatrix}
\]
allow us to construct the following complex on $X'$:
\[
\xymatrix@R=\linhaxylabels@C=\colunaxylabels{\OO_{X'}(-1)^{c+p+q} \ar[r]^-{f}_-{A} & \OO_{X'}^{\,2c+p+q} \ar[r]^-{g}_-{B} & \OO_{X'}(1)^c \ar[r] & 0.
}
\]
By construction $BA=0$ and $\rk B(x) = c$, for each $x\in X'$.

Our next goal is to construct an injective morphism on $X'$,
\begin{equation}\label{phiX}
\xymatrix@R=\linhaxylabels@C=\colunaxylabels{\OO_{X'}(-1)^{c+p+q-s} \ar[r]^-{\phi} & \OO_{X'}(-1)^{c+p+q}}
\end{equation}
so that we are able to compute the expected codimension of the degeneracy locus of the composition $f \circ \phi$, i.e.\ the codimension of
\[
Z_s = \{ x \in X' \mid \rk (f \circ \phi) (x) < c+p+q-s\}.
\]
Observe that the matrices $A$ and $B$ define two more complexes: one complex on an $n$-dimensional projective subspace $\PP^n\subset \PP^N$,
\[
\xymatrix@R=\linhaxylabels@C=\colunaxylabels{
\OO_{\PP^n}(-1)^{c+p+q} \ar[r]^-{\hat{f}}_-{A} & \OO_{\PP^n}^{\,2c+p+q} \ar[r]^-{\hat{g}}_-{B} & \OO_{\PP^n}(1)^c \ar[r] & 0,
}
\]
such that ${\PP^n \cap \Lambda = \emptyset}$, and another one on $\PP^N$ given by
\[
\xymatrix@R=\linhaxylabels@C=\colunaxylabels{
\OO_{\PP^N}(-1)^{c+p+q} \ar[r]^-{\bar{f}}_-{A} & \OO_{\PP^N}^{\,2c+p+q} \ar[r]^-{\bar{g}}_-{B} & \OO_{\PP^N}(1)^c.
}
\]
Consider a generic injective morphism
\[
\xymatrix@R=\linhaxylabels@C=\colunaxylabels{\OO_{\PP^n}(-1)^{c+p+q-s} \ar[r]^-{\hat{\phi}} &  \OO_{\PP^n}(-1)^{c+p+q},}
\]
$s\geq 0$, inducing both a morphism $\OO_{\PP^N}(-1)^{c+p+q-s} \stackrel{\bar{\phi}}{\longrightarrow} {\OO_{\PP^N}}(-1)^{c+p+q}$ and a morphism $\phi$ as in \eqref{phiX}. Note that the three morphisms are represented by the same matrix.

From Lemmas 2 and  3 in \cite{Flo00} it follows that the expected codimension of the degeneracy locus $\hat{Z}_s$ of $\hat{f}\circ\hat{\phi}$ is at least $ s+1$. Moreover, denoting the degeneracy locus of $\bar{f}\circ\bar{\phi}$ by $\bar{Z}_s$, we have the following relations:
\[
\bar{Z}_s =\textstyle \bigcup_{x \in \hat{Z}_s} \langle x,\Lambda \rangle, \quad Z_s = \bar{Z}_s \cap X'.
\]
Observe that the fact that $\hat{\phi}$ is injective implies that $\phi$ is also injective.
Computing dimensions, we obtain that $\codim_{\PP^N}\bar{Z}_s\geq s+1$ and thus
\[
\codim_{X'} Z_s \geq s+1.
\]
Then, taking ${s=c+p+q-a=b-a-c\geq 0}$, the complex
\[
\xymatrix@R=\linhaxy@C=\colunaxy{0\ar[r]&\OO_{X'}(-1)^a\ar[r]&\OO_{X'}^{\,b}\ar[r]&\OO_{X'}(1)^c\ar[r]&0}
\]
is a monad on $X'$ since we have $\codim_{X'} Z_s \geq s+1= b-c-a+1$ (so, $\dim Z_s\leq n-1)$.

\medskip

Now, suppose condition (ii) holds, i.e.\ ${b\geq a+c+n}$, and suppose that ${b<2c+n-1}$ (otherwise we would be again in case (i)). Hence, $c> a+1$ and $b>2a+n+1>2a+n-1$.

Applying case (i) to the inequalities ${b\geq a+c+n>a+c}$ and ${b>2a+n-1}$, we know there is a monad on $X'$ of type
\[
\xymatrix@R=\linhaxy@C=\colunaxy{0\ar[r]&\OO_{X'}(-1)^c\ar[r]&\OO_{X'}^{\,b}\ar[r]&\OO_{X'}(1)^a\ar[r]&0}
\]
where the map $\OO_{X'}(-1)^c \longrightarrow \OO_{X'}^{\,b}$ degenerates in codimension at least $b-a-c+1\geq n+1$. Dualising this complex, we get
\[
\xymatrix@R=\linhaxy@C=\colunaxy{0\ar[r]&\OO_{X'}(-1)^a\ar[r]&\OO_{X'}^{\,b}\ar[r]&\OO_{X'}(1)^c\ar[r]&0}
\]
which is still a monad on $X'$, for the codimension of the degeneracy locus of $\OO_{X'}(-1)^a \longrightarrow \OO_{X'}^{\,b}$ is at least $b-a-c+1$.
\end{proof}

\medskip

\begin{Remark}\label{rmk-pullback}
We could have constructed a monad on $X$ just by taking the pullback of a monad on $\PP^n$ and applying Floystad's result. In fact, we could have defined a finite morphism $X \rightarrow \PP^n$ by considering precisely $\dim X +1$ linearly independent global sections of $L$ (and not vanishing simultaneously at any point $x\in X$). The pullback via this morphism of a monad on $\PP^n$ would give us a monad on $X$.
Nevertheless, we note that the construction above is far more general. It allows us to use a bigger number of global sections and it also provides an explicit construction of the monad on $X$.
\end{Remark}

We next prove the two main results of this section, which generalise Fl\o{}ystad's theorem on the existence of monads on the projective space. We consider a variety $X$, a line bundle $L$ on $X$, and a base\h{point}\h{free} linear system of sections of $L$ giving a morphism to the projective space. Each result asks different properties on the image ${X' \subset \PP(V)}$ of the variety $X$.

Our first result characterises the existence of monads of type \eqref{genmonad} in the case when $X'$ is an ACM projective variety.

\begin{theorem}\label{maintheorem}
Let $X$ be a variety of dimension $n$ and let $L$ be a line bundle on $X$. Suppose there is a linear system ${V\subseteq H^0(L)}$, with no base points, defining a morphism ${X\to\PP(V)}$ whose image ${X'\subset\PP(V)}$ is a projective ACM variety. Then there exists a monad of type
\begin{equation}\label{genmonad}\tag{M}
\xymatrix@R=\linhaxylabels@C=\colunaxylabels{0\ar[r]&(L^\lor)^a\ar[r]^-{f}&\OO_{X}^{\,b}\ar[r]^-{g}&L^c\ar[r]&0}
\end{equation}
if and only if one of the following conditions holds:
\begin{enumerate}[(i)]
\item  $b \geq a + c$ and $b \geq 2c+n-1$, \label{genmonad1}
\item  $b \geq a+c+n$. \label{genmonad2}
\end{enumerate}
If so, there actually exists a monad with the map $f$ degenerating in expected codimension $b-a-c+1$. Furthermore, $g$ can be defined by a matrix whose entries are global sections of $L$ that span a subspace of $V$ whose dimension  is ${\min\big(b-2c+2,\dim V\big)}$.
\end{theorem}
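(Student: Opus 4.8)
The sufficiency of \eqref{genmonad1} and \eqref{genmonad2} is precisely Lemma~\ref{existencelemma}, which also produces the degeneracy codimension $b-a-c+1$ for $f$ and the bound $\min(b-2c+2,\dim V)$ on the span of the entries of $g$; none of this uses that $X'$ is ACM. So the plan is to prove the converse: assuming a monad \eqref{genmonad} exists, deduce that \eqref{genmonad1} or \eqref{genmonad2} holds. As in Lemma~\ref{existencelemma} I would pass to the equivalent monad
\[
0\to\OO_{X'}(-1)^a\xrightarrow{\,f\,}\OO_{X'}^{\,b}\xrightarrow{\,g\,}\OO_{X'}(1)^c\to 0
\]
on $X'$, and use throughout that $X'$ is linearly normal and satisfies the ACM vanishing $H^i_*(X',\OO_{X'})=0$ for $0<i<n$.

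The inequality $b\ge a+c$ is immediate and needs no hypothesis on $X'$: the surjection $g$ has rank $c$ at every point, $f$ is injective and hence of generic rank $a$, and $g\circ f=0$ gives $\im f\subseteq\ker g$ at the generic point, so $a\le b-c$. Since \eqref{genmonad1} amounts to $b\ge 2c+n-1$ once $b\ge a+c$ is known, the whole remaining content of the theorem is the implication: if \eqref{genmonad2} fails, that is $b\le a+c+n-1$, then $b\ge 2c+n-1$.

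To reach that implication I would study the kernel bundle $K:=\ker g$, of rank $b-c$ with $c_1(K)=-c$, together with the injection it must receive, namely $f\colon\OO_{X'}(-1)^a\hookrightarrow K$, equivalently $\OO_{X'}^{\,a}\hookrightarrow K(1)$. Writing the long exact cohomology sequences of $0\to K(t)\to\OO_{X'}(t)^b\to\OO_{X'}(t+1)^c\to 0$ and invoking the ACM vanishing yields $H^i(K(t))=0$ for $2\le i\le n-1$ and expresses $H^0(K(1))$ and $H^1(K(1))$ through the multiplication maps $H^0(\OO_{X'}(t))^b\to H^0(\OO_{X'}(t+1))^c$ determined by the entries of $g$. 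This is the step where the Cohen\h{Macaulay}ness of the coordinate ring is essential: it makes these computations identical to the ones on $\PP^n$, with the ambient dimension $N$ replaced by $n=\dim X'$. A generically injective $f$ forces $h^0(K(1))\ge a$, and the task is to show that for every surjection $g$ this lower bound is incompatible with $c_1(K(1))=b-2c\le n-2$; equivalently, that $\ker g$ is then too negative to carry $a$ independent sections after the twist by $\OO_{X'}(1)$. The numerical threshold $b-2c\ge n-1$ emerging here is exactly \eqref{genmonad1}, and the instanton monads \eqref{monadInst}, for which $b-2c=n-1$ with $n=2l+1$, sit precisely on its boundary.

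I expect this last bound to be the main obstacle. The easy ingredients --- $b\ge a+c$ from generic ranks, and even the weaker surjectivity estimate $b\ge c+n$ that follows from the nonemptiness of the degeneracy locus of $g$ (legitimate because $\mathcal{H}om(\OO_{X'}^{\,b},\OO_{X'}(1)^c)=\OO_{X'}(1)^{bc}$ is ample) --- are pointwise or soft positivity statements. By contrast, the threshold $2c+n-1$ is a global statement about $\ker g$ that must hold uniformly over all admissible surjections $g$, and here the quantitative estimates of Fl\o{}ystad (\cite{Flo00}, Lemmas~2 and~3, already invoked in Lemma~\ref{existencelemma}) must be transported from $\PP^n$ to $X'$ through the ACM cohomology. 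Checking that this transport is faithful --- in particular that the codimension $N-n$ of $X'$ in $\PP^N$ leaves no trace in the final inequality --- is the delicate point; the case $n=1$, where the vanishing range $0<i<n$ is empty, I would settle separately by a direct splitting analysis on the general ACM curve section.
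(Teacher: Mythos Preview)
Your proposal correctly isolates the sufficiency as Lemma~\ref{existencelemma} and the easy inequality $b\ge a+c$. The gap is in the core necessity step. You propose to obtain $b\ge 2c+n-1$ by bounding $h^0(K(1))$ for $K=\ker g$ via the ACM long exact sequence, but that sequence only gives
\[
h^0\big(K(1)\big)=b\,h^0\big(\OO_{X'}(1)\big)-c\,h^0\big(\OO_{X'}(2)\big)+h^1\big(K(1)\big),
\]
and $h^1(K(1))$ depends on the specific surjection $g$ and is not controlled by $b,c,n$ alone (it vanishes only for small $c$; compare Theorem~\ref{theorem2}). So the inequality $h^0(K(1))\ge a$ does not by itself force the threshold $b-2c\ge n-1$. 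Moreover, the existence of an \emph{injective} sheaf map $f$ is strictly stronger than $h^0(K(1))\ge a$, and it is this extra geometric information that must be exploited; a purely cohomological count loses it. Your reference to Fl\o{}ystad's Lemmas~2 and~3 does not help here: those lemmas bound degeneracy loci of \emph{specific} maps built from Toeplitz matrices and are used to \emph{construct} monads, not to obstruct them.

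The paper's argument is of a different nature and does not pass through $h^0(K(1))$. It works in the homogeneous coordinate ring $S=H^0_*(\OO_{X'})$, which is Cohen--Macaulay by hypothesis. One chooses a general $(c+n-1)$-dimensional subspace $U$ of the image of $H^0(g)$, so that the induced map $p\colon U\otimes S\to S(1)^c$ degenerates in the expected codimension $n$; one then compares Fitting ideals of $\coker p$ and of the complementary map $q\colon S(-1)^a\to W\otimes S$ with $W=H^0(\OO_{X'}^{\,b})/U$. The Buchsbaum--Eisenbud equality $\Fitt_1(\coker\tilde p)=\Ann(\coker\tilde p)$ and the Cohen--Macaulayness of $S/\Fitt_1(\coker p)$ (hence saturatedness of this Fitting ideal) force $\Fitt_1(\coker q)$ to live in degrees $\ge c$. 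Since $q$ has generic maximal rank, its maximal minors have degree $\min(a,\dim W)$, which yields $\dim W\ge c$ or $\dim W>a$, i.e.\ $b\ge 2c+n-1$ or $b\ge a+c+n$. The ACM hypothesis enters exactly twice: to identify $S$ with $H^0_*(\OO_{X'})$ (projective normality) and to guarantee that $\Fitt_1(\coker p)$ is saturated. This commutative-algebra route is what your sketch is missing.
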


Note that if condition (ii) in the above theorem is satisfied then there exists a monad whose cohomology is a vector bundle of rank greater than or equal to the dimension of $X$.

\bigskip

\begin{proof}

The existence of the monad in case conditions (i) or (ii) are satisfied follows from Lemma \ref{existencelemma}. Let us show that these conditions are necessary.
Suppose we have a monad on $X'$
\[
\xymatrix@R=\linhaxylabels@C=\colunaxylabels{0\ar[r]&\OO_{X'}(-1)^a\ar[r]^-{f}&\OO_{X'}^{\,b}\ar[r]^-{g}&\OO_{X'}(1)^c\ar[r]&0.}
\]
This immediately implies that $b\geq a+c$.
The image of the induced map $H^0(\OO_{X'}^{\,b})\to H^0\big(\OO_{X'}(1)^c\big)$ defines a vector subspace $U'\subset H^0\big(\OO_{X'}(1)^c\big)$ which globally generates $\OO_{X'}(1)^c$. In particular, there is a diagram
\[
\xymatrix@R=\linhaxylabels@C=\colunaxylabels{
\OO_{X'}^{\,b} \ar[d]\ar[r]^-{g} & \OO_{X'}(1)^c \ar[r] & 0\\
U'\otimes \OO_{X'} \ar[d]\ar[ur]_-{\tilde{g}} \\
0
}
\]
Since $\OO_{X'}(1)^c$ is globally generated via $\tilde{g}$, we have ${\dim U'\geq c+n}$, otherwise the degeneracy locus of $\tilde{g}$ would be non\h{empty}.

Let $U\subset U'$ be a general subspace with $\dim U=c+n-1$. Hence the map $\tilde{p}:U\otimes \OO_{X'}\to \OO_{X'}(1)^c$, induced by $\tilde{g}$, degenerates in dimension zero. Take a splitting
\[
\xymatrix@R=\linhaxy@C=\colunaxy{H^0(\OO_{X'}^{\,b}) \ar@<.5ex>[r] & U' \ar@<.5ex>[l]}
\]
and define $W=H^0(\OO_{X'}^{\,b})/U$. Denote $\mathcal{I}=\mathcal{I}(X')\subset k[z_0,\ldots,z_N]$. Let $S=k[z_0,\ldots,z_N]/\mathcal{I}$ be the coordinate ring of $X'$. Since $X'$ is projectively normal, $S$ is integrally closed and therefore ${S=H^0_*(\OO_{X'})}$, so we have the following commutative diagram of graded $S$-modules:
\[
\xymatrix@R=\linhaxylabels@C=\colunaxylabels{
& U\otimes S \ar[d] \ar@{=}[r] & U\otimes S \ar[d]^-{p} \\
S(-1)^a \ar[r] \ar@{=}[d] & S^b \ar[r]\ar[d] & S(1)^c\\
S(-1)^a \ar[r]_-{q}  & W\otimes S}
\]
Sheafifying the above diagram, we get a surjective map
\[
\xymatrix@R=\linhaxy@C=\colunaxy{\coker\tilde{q}\ar[r] & \coker\tilde{p} \ar[r] & 0.}
\]
Because $\tilde{p}$ degenerates in the expected codimension we have, by Theorem 2.3 in \cite{BE77b},
\[
\Fitt_1(\coker\tilde{p})=\Ann(\coker\tilde{p}),
\]
and so we obtain the following chain of inclusions
\[
\Fitt_1(\coker \tilde{q})\subset \Ann(\coker\tilde{q})\subset \Ann(\coker\tilde{p})=\Fitt_1(\coker \tilde{p}),
\]
where the first inclusion follows from Proposition 20.7.a in \cite{Eis94}. Therefore,
\[
\Fitt_1(\coker q)\subset H^0_{*}\big(\Fitt_1(\coker\tilde{q})\big)\subset H^0_{*}\big(\Fitt_1(\coker\tilde{p})\big).
\]
Since $p$ degenerates in expected codimension $n$ and $X'$ is ACM, $S/{\Fitt_1(\coker p)}$ is a Cohen\h{Macaulay} ring of dimension $1$ (see \cite{Eis94}, Theorem 18.18). In particular, $\Fitt_1(\coker p)$ is a saturated ideal because the irrelevant maximal ideal $\mathfrak{m}\subset S$ is not an associate prime of it, and thus
\[
H^0_*\big(\Fitt_1(\coker\tilde{p})\big)=\Fitt_1(\coker p).
\]
By definition, $\Fitt_1(\coker p)$ is generated by polynomials of degree at least $c$, so all polynomials in $\Fitt_1(\coker q)$ must also have degree at least $c$. Note that the map $q$ may be assumed to have generic maximal rank for $f$ is injective and $S^b\to W\otimes S$ is a general quotient. This leads to two possibilities: either $\dim W\geq c$ or $\dim W> a$. Recalling that $\dim W=b-c-n+1$, we obtain respectively
\[
b\geq 2c+n-1
\]
or
\[
b\geq a+c+n,
\]
and this concludes the proof.
\end{proof}

\bigskip

We now state the second characterisation result, with a similar setting as in Theorem \ref{maintheorem}, except that we drop the hypothesis that the image $X'$ of $X$ in $\PP(V)$ is ACM, and assume instead that it is linearly normal and not contained in a quadric.

\begin{theorem}\label{notcutbyquadricstheorem}
Let $X$ be a variety of dimension $n$ and let $L$ be a line bundle on $X$. Suppose there is a linear system ${V\subseteq H^0(L)}$, with no base points, defining a morphism ${X\to\PP(V)}$ whose image ${X'\subset\PP(V)}$ is linearly normal and not contained in a quadric hypersurface. Then there exists a monad of type
\begin{equation}\tag{M}
\xymatrix@R=\linhaxylabels@C=\colunaxylabels{0\ar[r]&(L^\lor)^a\ar[r]^-{f}&\OO_{X}^{\,b}\ar[r]^-{g}&L^c\ar[r]&0}
\end{equation}
if and only if one of the following conditions holds:
\begin{enumerate}[(i)]
\item  $b \geq a + c$ and $b \geq 2c+n-1$, \label{genmonad1}
\item  $b \geq a+c+n$. \label{genmonad2}
\end{enumerate}
If so, there actually exists a monad with the map $f$ degenerating in expected codimension $b-a-c+1$. Furthermore, $g$ can be defined by a matrix whose entries are global sections of $L$ that span a subspace of $V$ whose dimension is ${\min\big(b-2c+2,\dim V\big)}$.
\end{theorem}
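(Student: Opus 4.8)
The plan is to imitate the proof of Theorem~\ref{maintheorem} as closely as possible, to isolate the one place where the ACM hypothesis was really used, and to replace it by an argument resting on the quadric hypothesis. Sufficiency costs nothing: if condition (i) or (ii) holds, Lemma~\ref{existencelemma} produces a monad of type (M), and that lemma assumes nothing about the image $X'$ beyond base\h{point}\h{freeness}. So the whole content is necessity. Writing ${N=\dim V-1}$ and ${\PP^N=\PP(V)}$, a monad of type (M) is equivalent to a monad
\[
\xymatrix@R=\linhaxy@C=\colunaxy{0\ar[r]&\OO_{X'}(-1)^a\ar[r]^-{f}&\OO_{X'}^{\,b}\ar[r]^-{g}&\OO_{X'}(1)^c\ar[r]&0,}
\]
and from the ranks of $f$ and $g$ one reads off ${b\geq a+c}$ at once; the work is to deduce ${b\geq 2c+n-1}$ or ${b\geq a+c+n}$.

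My first point is that the necessity argument of Theorem~\ref{maintheorem} is almost independent of ACM. Exactly as there, I would take ${U'\subseteq H^0\big(\OO_{X'}(1)^c\big)}$ to be the image of $H^0(g)$, observe that it globally generates $\OO_{X'}(1)^c$ so that ${\dim U'\geq c+n}$, pick a general ${U\subseteq U'}$ of dimension ${c+n-1}$ so that ${\tilde p\colon U\otimes\OO_{X'}\to\OO_{X'}(1)^c}$ degenerates in dimension~$0$, set ${W=H^0(\OO_{X'}^{\,b})/U}$, and build the same commutative diagram producing $q$ and a surjection ${\coker\tilde q\to\coker\tilde p}$. The chain
\[
\Fitt_1(\coker q)\subseteq H^0_*\big(\Fitt_1(\coker\tilde q)\big)\subseteq H^0_*\big(\Fitt_1(\coker\tilde p)\big)
\]
uses only Theorem~2.3 of \cite{BE77b} (valid because $\tilde p$ degenerates in the \emph{expected} codimension~$n$, which needs no ACM) and Proposition~20.7 of \cite{Eis94}. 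Thus ACM was invoked at a single point: to guarantee that ${S=H^0_*(\OO_{X'})}$ and that ${H^0_*\big(\Fitt_1(\coker\tilde p)\big)=\Fitt_1(\coker p)}$ is saturated and generated in the degree that forced the dichotomy ${\dim W\geq c}$ or ${\dim W>a}$, i.e.\ condition (i) or (ii).

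The quadric hypothesis replaces that step. Linear normality gives ${V=H^0\big(\OO_{X'}(1)\big)}$, so every entry of the matrices $A$ and $B$ of $f$ and $g$ lifts to $V$; and ``not contained in a quadric'' means exactly ${H^0\big(\mathcal{I}_{X'/\PP^N}(2)\big)=0}$, that is, the multiplication map ${\mathrm{Sym}^2 V\to H^0\big(\OO_{X'}(2)\big)}$ is injective. Consequently the identity ${BA=0}$, which a priori holds only in $H^0\big(\OO_{X'}(2)\big)$, already holds in ${\mathrm{Sym}^2 V}$; hence $A$, $B$, and in fact the whole module diagram above lift to the polynomial ring ${R=\mathrm{Sym}\,V}$, the lifted maps keeping their linear entries and the degree\h{two} identities that define the diagram remaining valid. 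Since $R$ is Cohen\h{Macaulay} and integrally closed, the saturation and degree statements that ACM supplied for the coordinate ring of $X'$ now hold automatically over $R$: the first Fitting ideal of the lifted cokernel of $p$ is cut out by minors of linear forms, hence generated in the same degree as in Theorem~\ref{maintheorem}. Transporting this bound back to $X'$ along ${R\to S}$ bounds the degrees of the generators of $\Fitt_1(\coker q)$, and the dichotomy ${\dim W\geq c}$ or ${\dim W>a}$ follows verbatim; recalling ${\dim W=b-c-n+1}$ gives condition (i) or (ii).

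The hard part, and the step needing real care, is to keep the dimension count pinned to ${n=\dim X'}$ and not let it drift to $N$. The bound ${\dim U'\geq c+n}$ and the expected codimension~$n$ of the degeneracy locus of $\tilde p$ are genuinely $n$\h{dimensional} facts about $X'$, so after lifting to $R$ I must still know that the lifted map degenerates in the \emph{expected} codimension~$n$ on $\PP^N$ for Buchsbaum\h{Eisenbud} to apply over $R$; equivalently, I must control the degeneracy locus of the lifted map away from $X'$. I expect to handle this using that, by linear normality, $U'$ is literally the same subspace of $V^c$ whether computed on $X'$ or on $\PP^N$ --- it is the column space of $B$ --- so that the genericity used to choose $U$ is available on $\PP^N$ as well; proving that a general such $U$ forces the lifted map to degenerate in codimension~$n$ is the crux on which the whole argument turns.
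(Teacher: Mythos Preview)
Your plan diverges sharply from the paper's, and the gap you flag at the end is genuine and, I think, not easily repaired.

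The paper does \emph{not} attempt to lift the Fitting\h{ideal} argument of Theorem~\ref{maintheorem} to the polynomial ring. Instead it proceeds as follows. Linear normality lets one lift the matrices $A,B$ to linear forms on $\PP^N$; the hypothesis $H^0\big(\mathcal I_{X'}(2)\big)=0$ then forces $BA=0$ already on $\PP^N$, giving a complex
\[
\xymatrix@R=\linhaxy@C=\colunaxy{\OO_{\PP^N}(-1)^a\ar[r]^-{A}&\OO_{\PP^N}^{\,b}\ar[r]^-{B}&\OO_{\PP^N}(1)^c.}
\]
Now one simply \emph{slices by a general linear $\PP^n\subset\PP^N$}. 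Since $g$ is surjective on $X'$, the degeneracy locus $Z_B\subset\PP^N$ of $B$ misses $X'$; as $\dim X'=n$, the projective intersection inequality forces $\dim Z_B\le N-n-1$, so a general $\PP^n$ avoids $Z_B$. Likewise $\dim(Z_A\cap X')\le n-1$ gives $\dim Z_A\le N-1$, so $\dim(Z_A\cap\PP^n)\le n-1$ for general $\PP^n$. Thus the restricted complex is a monad on $\PP^n$, and Fl\o{}ystad's Theorem~\ref{TeoFlo} delivers condition (i) or (ii) directly. No commutative algebra beyond a dimension count is needed.

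Your approach, by contrast, requires that for a general $(c+n-1)$\h{dimensional} $U\subset U'$ the lifted map $U\otimes\OO_{\PP^N}\to\OO_{\PP^N}(1)^c$ degenerate in codimension exactly $n$ on all of $\PP^N$; without this, Buchsbaum\h{Eisenbud} over $R$ fails and the saturation/degree argument collapses. But $U'$ is the column space of the \emph{given} matrix $B$, not a generic subspace of $V^c$, and nothing prevents the determinantal locus from being too large off $X'$. The control you have is only over $Z_B\cap X'$, and that is precisely what the paper exploits by cutting with a general $\PP^n$ rather than trying to work on $\PP^N$ globally. In short, the paper trades your delicate ``keep the codimension pinned to $n$ on $\PP^N$'' problem for the trivial observation that a general $n$\h{plane} meets the degeneracy loci correctly, and then invokes Fl\o{}ystad as a black box.
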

\begin{proof}
The proof of the existence of a monad of type \eqref{genmonad} follows again from Lemma \ref{existencelemma}. Let us check that at least one of conditions \eqref{monadFlo1} or \eqref{monadFlo2} is necessary. Let ${N=\dim V-1}$ and denote ${\PP^N=\PP(V)}$.

Suppose that there is a monad
\[
\xymatrix@R=\linhaxylabels@C=\colunaxylabels{0\ar[r]&(L^\lor)^a\ar[r]^-{f}&\OO_{X}^{\,b}\ar[r]^-{g}&L^c\ar[r]&0.}
\]
Let $A$ and $B$ be matrices defining $f$ and $g$, respectively. Since the entries of both matrices are elements of $H^0(L)$ and $X'$ is linearly normal, we can choose linear forms on $\PP^N$ to represent them,  so the entries in the product $BA$ can be regarded as elements of $H^0\big(\OO_{\PP^N}(2)\big)$. Since $X'$ is not cut out by any quadric, $BA$ is zero on $\PP^N$ yielding a complex
\[
\xymatrix@R=\linhaxylabels@C=\colunaxylabels{\OO_{\PP^N}(-1)^a\ar[r]^-{\tilde{f}}&\OO_{\PP^N}^{\,b}\ar[r]^-{\tilde{g}}&\OO_{\PP^N}(1)^c}.
\]
Furthermore, denoting by $Z_A$ and $Z_B$ the degeneracy loci in $\PP^N$ of $A$ and $B$, respectively, we know that ${\dim (Z_A\cap X')\le n-1}$ and $Z_B$ does not intersect $X'$. Therefore their dimensions satisfy ${\dim Z_A\le N-1}$ and ${\dim Z_B\le N-n-1}$. We can consider a general subspace $\PP^n$ that does not meet $Z_B$ and also satisfies ${\dim (Z_A\cap \PP^n)\le n-1}$. So if we consider the complex
\[
\xymatrix@R=\linhaxylabels@C=\colunaxylabels{\OO_{\PP^n}(-1)^a\ar[r]^-{\hat{f}}&\OO_{\PP^n}^{\,b}\ar[r]^-{\hat{g}}&\OO_{\PP^n}(1)^c},
\]
also defined by the matrices $A$ and $B$, we see that $\hat{f}$ is injective and $\hat{g}$ is surjective, so we have a monad on $\PP^n$ and by Theorem \ref{TeoFlo} at least one of conditions \eqref{monadFlo1} and \eqref{monadFlo2} is satisfied.
\end{proof}

\begin{Example}
In \cite{MMS14} the second and third authors presented a collection of examples of monads on Segre varieties. Using the same approach, we can think of similar examples of monads of some varieties that are cut out by quadrics, such as the Grassmannian. The simplest case that is not a hypersurface is ${\GG(2,5)}$, the Grassmannian that parametrises planes in the projective space $\PP^5$, which is embedded in $\PP^{19}$ with Pl\"{u}cker coordinates ${[X_{j_0j_1j_2}]_{0\le j_0<j_1<j_2\le5}}$ satisfying
\begin{equation}\label{Grassmannianequations}
\sum_{s=0}^3(-1)^{s}X_{j_0j_1l_s}X_{l_0\cdots \widehat{l_s}\cdots l_3}=0
\end{equation}
for ${0\le j_0<j_1\le 5}$ and ${0\le l_0<l_1<l_2<l_3\le 5}$, where ${X_{i_0i_1i_2}=(-1)^\sigma X_{i_{\sigma_0}i_{\sigma_1}i_{\sigma_2}}}$, for any permutation $\sigma$, and ${X_{i_0i_1i_2}=0}$ if there are any repeated indices. One of these quadrics is
\begin{multline*}
X_{012}X_{345} - X_{013}X_{245} + X_{014}X_{235} - X_{015}X_{234}=\\
  \tfrac{1}{4} \big( (X_{012}+X_{345})^2 - (X_{012}-X_{345})^2
    - (X_{013}+X_{245})^2 + (X_{013}-X_{245})^2\\
  + (X_{014}+X_{235})^2 - (X_{014}-X_{235})^2
    - (X_{015}+X_{234})^2 + (X_{015}-X_{234})^2\big),
\end{multline*}
obtained by using the sextuple ${(j_0,j_1,l_0,l_1,l_2,l_3)=(0,1,2,3,4,5)}$ in \eqref{Grassmannianequations}. Now, for any pair $(a,b)$, with ${1\le a<b\le5}$, consider the linear forms
\[{u_{ab}:=X_{0ab}+X_{i_1i_2i_3}}\text{ and }{v_{ab}:=X_{0ab}-X_{i_1i_2i_3}},\]
where $i_1$, $i_2$, and $i_3$, are the unique integers satisfying ${\{a,b,i_1,i_2,i_3\}=\{1,2,3,4,5\}}$ and ${i_1<i_2<i_3}$. Then the twenty forms in $\{u_{ab},v_{ab}\}_{1\le a<b\le5}$ form a new basis of the coordinate ring of $\PP^{19}$ and the above quadric can be rewritten as
\[
\tfrac{1}{4}\big(u_{12}^{\,2} - v_{12}^{\,2} - u_{13}^{\,2} + v_{13}^{\,2} + u_{14}^{\,2} - v_{14}^{\,2}
  - u_{15}^{\,2} + v_{15}^{\,2}\big).
\]
So if seven of the eight linear forms occurring in this quadric vanish at a point of ${\GG(2,5)}$ so does the eighth. Similarly, using ${(0,3,1,2,4,5)}$, ${(0,4,1,2,3,5)}$, and ${(0,5,1,2,3,4)}$ for ${(j_0,j_1,l_0,l_1,l_2,l_3)}$ in \eqref{Grassmannianequations}, we see that ${\GG(2,5)}$ is also cut out by
\begin{align*}
\tfrac{1}{4}\big(- u_{13}^{\,2} + v_{13}^{\,2} + u_{23}^{\,2} - v_{23}^{\,2} + u_{34}^{\,2} 
  - v_{34}^{\,2} - u_{35}^{\,2} + v_{35}^{\,2}\big),\\
\tfrac{1}{4}\big(- u_{14}^{\,2} + v_{14}^{\,2} + u_{24}^{\,2} - v_{24}^{\,2} - u_{34}^{\,2}
  + v_{34}^{\,2} - u_{45}^{\,2} + v_{45}^{\,2}\big),\\
\intertext{and}
\tfrac{1}{4}\big(- u_{15}^{\,2} + v_{15}^{\,2} + u_{25}^{\,2} - v_{25}^{\,2} - u_{35}^{\,2}
  + v_{35}^{\,2} + u_{45}^{\,2} - v_{45}^{\,2}\big).
\end{align*}
Therefore, the sixteen linear forms ${u_{23},\ldots,u_{45},v_{12},\ldots,v_{45}}$ cannot simultaneously vanish at a point of the Grassmannian, otherwise so would the remaining four $u_{12}$, $u_{13}$, $u_{14}$, and $u_{15}$. So let us write
\begin{align*}
w_1&=u_{23}, & w_2&=u_{24}, & w_3&=u_{25}, & w_4&=u_{34}, & w_5&=u_{35},
  & w_6&=u_{45}, & w_7&=v_{12}, & w_8&=v_{13},\\
w_9&=v_{14}, & w_{10}&=v_{15}, & w_{11}&=v_{23}, & w_{12}&=v_{24},
  & w_{13}&=v_{25}, & w_{14}&=v_{34}, & w_{15}&=v_{35}, & w_{16}&=v_{45}.
\end{align*}

Let ${k\ge1}$ and let ${A_1,A_2\in M_{(k+7)\times k}(S)}$ and ${B_1,B_2\in M_{k\times(k+7)}(S)}$ be the matrices with entries in ${S:=K[X_{012},\ldots,X_{345}]}$, given by
\begin{gather*}
A_1=
\begin{bmatrix}
w_8\\
\vdots&\ddots\\
w_1&&w_8\\
&\ddots&\vdots\\
&&w_1
\end{bmatrix},
\quad
A_2=
\begin{bmatrix}
w_{16}\\
\vdots&\ddots\\
w_9&&w_{16}\\
&\ddots&\vdots\\
&&w_9
\end{bmatrix},\\[1ex]
B_1=
\begin{bmatrix}
w_1&\cdots&w_8\\
&\ddots&&\ddots\\
&&w_1&\cdots&w_8
\end{bmatrix},
\quad\mbox{and}\quad
B_2=
\begin{bmatrix}
w_9&\cdots&w_{16}\\
&\ddots&&\ddots\\
&&w_9&\cdots&w_{16}
\end{bmatrix},
\end{gather*}
and note that ${B_1A_2=B_2A_1}$. Let $A$ and $B$ be the matrices
\begin{equation}\label{Segremonadmatrices}
A=\begin{bmatrix}-A_2\\A_1\end{bmatrix}\qquad\textrm{and}\qquad
 B=\begin{bmatrix}B_1&B_2\end{bmatrix},
\end{equation}
and let
\begin{equation*}
\xymatrix@C=\colunaxylabels{0\ar[r]&\OO_{\GG(2,5)}(-1)^k
 \ar[r]^-\alpha&\OO_{\GG(2,5)}^{\; 2k+14}
 \ar[r]^-\beta&\OO_{\GG(2,5)}(1)^k\ar[r]&0}
\end{equation*}
be the sequence with maps $\alpha$ and $\beta$ defined by matrices $A$ and $B$, respectively. Now $A$ and $B$ fail to have maximal rank $k$ if and only if ${w_1,\ldots,w_{16}}$ are all zero, which, as we have seen, cannot happen in the Grassmannian variety. In particular, $\alpha$ is injective and $\beta$ is surjective, and since ${BA=0}$, this sequence yields a monad. We can reduce the exponent of the middle term in the monad, by using the method we described in the proof of Theorem \ref{maintheorem}, combined with this construction. Let  ${\Lambda \subset \PP^{19}}$ be the projective subspace defined by the following ten linear forms
\begin{align*}
&X_{012} - X_{345}; \quad X_{013} - X_{245}; \quad X_{014} - X_{235};
  \quad X_{015} - X_{234};  \\
&X_{023} - X_{145}; \quad X_{024} - X_{135}; \quad X_{025} - X_{034};
  \quad X_{035} - X_{123};  \\
&X_{045} + X_{134} + X_{124}; \quad \text{and}
  \quad X_{045} + 2X_{134} - 3X_{124} - 5X_{125} + 7X_{012} + 11X_{013}.
\end{align*}
With the help of a computer algebra system such as Macaulay \cite{GS}, we can check that $\Lambda$ is disjoint from $\GG(2,5)$, so if ${w_1',\ldots,w_{10}'}$ are linear forms that complete a basis for the coordinate ring of $\PP^{19}$, we can use them to construct matrices analogous to $A$ and $B$ above and obtain a monad
\begin{equation*}
\xymatrix@C=\colunaxylabels{0\ar[r]&\OO_{\GG(2,5)}(-1)^k
 \ar[r]^-\alpha&\OO_{\GG(2,5)}^{\; 2k+8}
 \ar[r]^-\beta&\OO_{\GG(2,5)}(1)^k\ar[r]&0.}
\end{equation*}
\end{Example}

\section{Simplicity}
Recall that a vector bundle $E$ is said to be simple if its only endomorphisms
are the homotheties, i.e. $\Hom(E,E)=\CC$.
The cohomology of a monad on $\PP^N$ of type \eqref{monadFlo} is known to be simple when it has rank $N-1$ (see \cite{AO94}). Moreover, every instanton bundle on the hyperquadric $Q^{2l+1}\subset\PP^{2l+2}$ is simple (\cite{CMR09}).

We next address the problem of the simplicity of the cohomology of monads on projective varieties of the form \eqref{genmonad}.

\begin{proposition}\label{simpleprop}
Let $X$ be a variety of dimension $n$ and let $L$ be a line bundle on $X$. Suppose there is a linear system ${V\subseteq H^0(L)}$, with no base points, ${\dim V\ge3}$, defining a morphism ${X\to\PP(V)}$ such that the ideal sheaf of its image $X'$ satisfies
\[
h^1\big(\mathcal{I}_{X'}(-1)\big)=h^2\big(\mathcal{I}_{X'}(-1)\big)=
  h^2\big(\mathcal{I}_{X'}(-2)\big)=h^3\big(\mathcal{I}_{X'}(-2)\big)=0.
\]
Let $a$ and $b$ be integers such that
\[
{\max\{n+1,a+1\}\le b\le \dim V}.
\]
Then there exists a monad
\begin{equation}\label{Simplemonad}
\xymatrix@R=\linhaxylabels@C=\colunaxylabels{0\ar[r]&(L^\vee)^a\ar[r]^-{f}&\OO_{X}^{\,b}\ar[r]^-{g}&L\ar[r]&0.}
\end{equation}
whose cohomology sheaf is simple.

Moreover, when $b$ is minimal, that is $b=n+1$, then any monad of type \eqref{Simplemonad} has a simple cohomology sheaf.
\end{proposition}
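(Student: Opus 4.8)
The plan is as follows. The existence of a monad of type \eqref{Simplemonad} is immediate from Lemma~\ref{existencelemma}: since $c=1$, the hypothesis $\max\{n+1,a+1\}\le b$ gives $b\ge a+1$ and $b\ge n+1=2c+n-1$, which is exactly condition \eqref{genmonad1}. Following the proof of Theorem~\ref{maintheorem} I would work with the corresponding monad
\[
\xymatrix@R=\linhaxy@C=\colunaxy{0\ar[r]&\OO_{X'}(-1)^a\ar[r]^-{f}&\OO_{X'}^{\,b}\ar[r]^-{g}&\OO_{X'}(1)\ar[r]&0}
\]
on $X'$ and prove that its cohomology sheaf $E$ is simple. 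Feeding the structure sequence ${0\to\mathcal{I}_{X'}\to\OO_{\PP^N}\to\OO_{X'}\to0}$ twisted by $-1$ and $-2$ into cohomology, and using that $N=\dim V-1\ge2$ forces $H^i\big(\OO_{\PP^N}(-1)\big)=H^i\big(\OO_{\PP^N}(-2)\big)=0$ for all $i$, the four hypotheses translate into
\[
H^0\big(\OO_{X'}(-1)\big)=H^1\big(\OO_{X'}(-1)\big)=H^1\big(\OO_{X'}(-2)\big)=H^2\big(\OO_{X'}(-2)\big)=0,
\]
the first vanishing being automatic since $\OO_{X'}(1)$ is very ample.

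Next I would form the display of the monad, writing $K=\ker g$ and $Q=\coker f$, so that $E=K/\im f=\ker\big(Q\to\OO_{X'}(1)\big)$, and reduce $\Hom(E,E)$ to linear algebra. Applying $\Hom(-,Q)$ to ${0\to\im f\to\OO_{X'}^{\,b}\to Q\to0}$, the vanishings $H^0\big(\OO_{X'}(-1)\big)=H^1\big(\OO_{X'}(-1)\big)=0$ give $\End(\OO_{X'}^{\,b})\cong\Hom(\OO_{X'}^{\,b},Q)$; applying it to ${0\to K\to\OO_{X'}^{\,b}\to\OO_{X'}(1)\to0}$, the vanishings $H^0\big(Q(-1)\big)=H^1\big(Q(-1)\big)=0$ — which I would read off by twisting ${0\to\OO_{X'}(-1)^a\to\OO_{X'}^{\,b}\to Q\to0}$ by $-1$ and using all four displayed vanishings — give $\Hom(\OO_{X'}^{\,b},Q)\cong\Hom(K,Q)$ by restriction. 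Tracing an endomorphism of $E$ through the injections $\Hom(E,E)\hookrightarrow\Hom(K,E)\hookrightarrow\Hom(K,Q)\cong\End(\OO_{X'}^{\,b})$, I expect to arrive at
\[
\Hom(E,E)\cong\big\{\,\beta\in\End(\OO_{X'}^{\,b})\ \big|\ g\circ\beta=\lambda\,g\text{ for some }\lambda\in k,\ \beta(\im f)\subseteq\im f\,\big\},
\]
the first condition saying that $\beta$ preserves $K=\ker g$, the second that it preserves $\im f$. This homological bookkeeping — verifying that precisely these four hypotheses make every intermediate $\Hom$ and $\Ext^1$ term collapse — is the step I expect to be the main obstacle.

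Once this identification is in place the argument finishes quickly. The map $g$ is a row $(g_1,\dots,g_b)$ of global sections of $\OO_{X'}(1)$, and $g\circ\beta=\lambda g$ reads $\sum_j\beta_{ji}\,g_j=\lambda\,g_i$ for all $i$; if the $g_i$ are linearly independent over $k$ this forces $\beta=\lambda\,\mathrm{id}$, whence $\Hom(E,E)=k$ and $E$ is simple. For the existence statement I would take the monad produced by Lemma~\ref{existencelemma}, whose map $g$ is defined by sections spanning a subspace of $V$ of dimension $\min(b-2c+2,\dim V)=\min(b,\dim V)=b$; these $b$ sections are therefore independent, and $E$ is simple.

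Finally, for the case $b=n+1$ I would show that \emph{every} monad of type \eqref{Simplemonad} already has independent $g_i$. Indeed surjectivity of $g$ means $g_1,\dots,g_{n+1}$ have no common zero on $X'$, so they cut out a base\h{point}\h{free} linear system; since $\OO_{X'}(1)$ is ample, the associated morphism $X'\to\PP^{\,r-1}$, with $r=\dim\operatorname{span}(g_i)$, cannot contract a curve and is hence finite onto its image, forcing $r-1\ge\dim X'=n$. Thus $r\ge n+1=b$, so $r=b$ and the $g_i$ form a basis of their span; the computation above then gives $\Hom(E,E)=k$ for an arbitrary such monad. The conceptual point throughout is that, after the homological reduction, simplicity of $E$ is equivalent to linear independence of the sections defining $g$.
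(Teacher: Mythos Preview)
Your argument is correct and takes a genuinely different route from the paper's. The paper first proves that $K=\ker g$ is simple by computing $h^1(K\otimes L^\vee)=1$ from the column of the display and observing $h^0(K)=0$, and then bootstraps to simplicity of $E$ via the injections $H^0(E\otimes E^\vee)\hookrightarrow H^0(E\otimes K^\vee)$ and $h^0(E\otimes K^\vee)=h^0(K\otimes K^\vee)=1$, each step consuming one of the four vanishing hypotheses. You instead bypass $K$ entirely and realise $\Hom(E,E)$ as a subspace of the matrix algebra $\End(\OO_{X'}^{\,b})\cong M_b(k)$: the two isomorphisms $\End(\OO_{X'}^{\,b})\cong\Hom(\OO_{X'}^{\,b},Q)\cong\Hom(K,Q)$ are exactly where the four vanishings enter, and then the chain $\Hom(E,E)\hookrightarrow\Hom(K,E)\hookrightarrow\Hom(K,Q)$ lands you in $\{\beta:g\beta=\lambda g\}$, which collapses to $k\cdot\mathrm{id}$ by linear independence of the $g_i$. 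Both proofs ultimately rest on the same ingredient---linear independence of the entries of $g$---but the paper uses it to force $h^0(K)=0$, whereas you use it as a direct linear\h{algebra} constraint on $\beta$. Your approach has the advantage of making the equivalence ``$E$ simple $\Leftrightarrow$ $g_1,\dots,g_b$ linearly independent'' completely transparent, and it avoids the intermediate step of proving $K$ simple; the paper's approach, on the other hand, yields simplicity of $K$ as a by\h{product}. For the minimal case $b=n+1$, your finiteness argument (a base\h{point}\h{free} sub\h{linear}\h{system} of an ample line bundle defines a finite morphism, so its target has dimension at least $n$) is a fuller justification of what the paper states tersely as ``otherwise $g$ would not be surjective''; both amount to the observation that $n+1$ sections of $L$ with no common zero on an $n$\h{dimensional} variety must be linearly independent.
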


\begin{proof}
Let ${N=\dim V-1}$ and write $\PP^N$ for $\PP(V)$. Since ${b\ge \max\{n+1,a+1\}}$, Lemma \ref{existencelemma} guarantees the existence of a monad of type \eqref{Simplemonad}. Moreover, since ${b\le \dim V}$, we can choose linearly independent linear forms for the matrix that represents $g$. Consider the display
\begin{equation*}
\xymatrix@R=\linhaxylabels@C=\colunaxylabels{
&&0\ar[d]&0\ar[d]\\
0\ar[r]&(L^\vee)^a\ar[r]\ar@{=}[d]&K\ar[r]\ar[d]&E\ar[r]\ar[d]&0\\
0\ar[r]&(L^\vee)^a\ar[r]^-{f}&\OO_{X}^{\,b}\ar[r]\ar[d]^-{g}&Q\ar[r]\ar[d]&0\\
&&L\ar[d]\ar@{=}[r]&L\ar[d]\\
&&0&0
}
\end{equation*}
Dualising the first column and tensoring with ${K}$ we get
\begin{equation}\label{SimplelowrankmonaddisplaytensorK}
\xymatrix@R=\linhaxy@C=\colunaxy{0\ar[r]&K\otimes L^\vee \ar[r]&{K}^{b}\ar[r]&K\otimes K^\vee\ar[r]&0.}
\end{equation}

We claim that $K$ is simple, i.e.\ ${h^0(K\otimes K^\vee)=1}$. To see this, we first observe that, by construction, $L\cong \morphismtopn^*\OO_{\PP^N}(1)$, where ${\morphismtopn:X\to\PP^N}$ is the morphism given by $L$. So, considering $\OO_{X'}$ as a sheaf over $\PP^N$, we have $\morphismtopn_*L\cong \OO_{X'}(1)$, and therefore $\morphismtopn_*L^\vee\cong \OO_{X'}(-1)$ and $\morphismtopn_*(L^\vee\otimes L^\vee)\cong \OO_{X'}(-2)$. Consider the exact sequence on $\PP^N$
\[
\xymatrix@R=\linhaxy@C=\colunaxy{0\ar[r]&\mathcal{I}_{X'}(-1)\ar[r]&{\OO_{\PP^N}}(-1)\ar[r]&\morphismtopn_*L^\vee\ar[r]&0.}
\]
Taking cohomology, we get that ${h^0(L^\vee)=h^1(L^\vee)=0}$ from the vanishing of the groups ${H^1\big(\mathcal{I}_{X'}(-1)\big)}$, ${H^2\big(\mathcal{I}_{X'}(-1)\big)}$, and ${H^i\big(\OO_{\PP^N}(-1)\big)}$. Now if we tensor the first column of the display by ${L^\vee}$ and take cohomology, we get $h^1(K\otimes L^\vee)=h^0(\OO_X)=1$. Note also that ${H^0(g):H^0(\OO_{X}^{\,b})\to H^0(L)}$ is injective, since the linear forms we chose to construct the matrix for $g$ are linearly independent, hence ${h^0(K)=0}$. Therefore we get an injective morphism
\begin{equation*}
\xymatrix@R=\linhaxy@C=\colunaxy{0\ar[r]&H^0(K\otimes K^\vee)\ar[r]&H^1(K\otimes L^\vee)}
\end{equation*}
induced by the exact sequence in \eqref{SimplelowrankmonaddisplaytensorK}, and we get ${h^0(K\otimes K^\vee)=1}$, as we wished.

We now consider the exact sequence
\[
\xymatrix@R=\linhaxy@C=\colunaxy{0\ar[r]&\mathcal{I}_{X'}(-2)\ar[r]&{\OO_{\PP^N}}(-2)\ar[r]&\morphismtopn_*(L^\vee\otimes L^\vee)\ar[r]&0}
\]
and take cohomology to get ${h^1(L^\vee\otimes L^\vee)=h^2(L^\vee\otimes L^\vee)=0}$, from the vanishing of the groups ${H^2\big(\mathcal{I}_{X'}(-2)\big)}$, ${H^3\big(\mathcal{I}_{X'}(-2)\big)}$, and ${H^i\big(\OO_{\PP^N}(-2)\big)}$, since ${N\ge2}$. We dualise the first row in the display and tensor by $E$ to obtain
\begin{equation}\label{SegrePlPmmonaddisplaytensorE}
\xymatrix@R=\linhaxy@C=\colunaxy{0\ar[r]&E\otimes E^\vee \ar[r]&E\otimes K^\vee\ar[r]&E\otimes L^a\ar[r]&0,}
\end{equation}
which induces an injective morphism
\begin{equation*}
\xymatrix@R=\linhaxy@C=\colunaxy{0\ar[r]&H^0(E\otimes E^\vee) \ar[r]&H^0(E\otimes K^\vee).}
\end{equation*}
By dualising the first column and tensoring by ${L^\vee}$, we may take cohomology and see that\linebreak ${h^0(K^\vee\otimes L^\vee)=h^1(K^\vee\otimes L^\vee)=0}$, for ${h^1(L^\vee\otimes L^\vee)=h^2(L^\vee\otimes L^\vee)=0}$ and ${h^0(L^\vee)=h^1(L^\vee)=0}$, as we saw above. Now tensoring the first row of the display by $K^\vee$ and taking cohomology we get ${h^0(K^\vee\otimes E)=h^0(K^\vee\otimes K)=1}$. Therefore ${h^0(E^\vee\otimes E)=1}$, i.e.\ $E$ is a simple sheaf.

Finally, given any monad of type \eqref{Simplemonad} with $b=n+1=\dim X+1$, the entries of the matrix defining $g$ must be linearly independent, otherwise it would not have maximal rank and $g$ would not be a surjective morphism. Since the linear independence of these linear forms is a key step in the beginning of the proof, we see that in this case, any monad of type \eqref{Simplemonad} has a simple cohomology sheaf.
\end{proof}

The next example shows that the statement in Proposition \ref{simpleprop} is accurate, that is  there are monads of type \eqref{Simplemonad} whose cohomology is not simple.

\begin{Example}
Consider the monad over the quadric $X\subset \PP^3$ embedded in $\PP^9$ by $L=\OO_X(2)$,
\[
\xymatrix@R=\linhaxylabels@C=\colunaxylabels{
0 \ar[r] & \OO_{X}(-2) \ar[r]^-{f}_-{A} & \OO_{X}^{\,5} \ar[r]^-{g}_-{B} & \OO_{X}(2) \ar[r] & 0,
}
\]
where
\[
B=
\begin{bmatrix}
x_0^{\,2} & x_1^{\,2} & x_2^{\,2} & x_3^{\,2} & x_3^{\,2}
\end{bmatrix}, 
\quad 
A=
\begin{bmatrix}
-x_3^{\,2} & -x_2^{\,2} & x_1^{\,2} & x_0^{\,2} & 0 
\end{bmatrix}^T,
\]
and $x_i$ are the coordinates in $\PP^3$ such that $X$ is defined by the form $x_0^{\,2}+x_1^{\,2}+x_2^{\,2}+x_3^{\,2}$. Then ${\max\{3,2\}\le b\le h^0\big(\OO_X(2)\big)=10=N+1}$, however $E$ is not simple.
In fact, first note that $K=\ker g$ is not simple since it admits the endomorphism
\[
\varphi: (f_1,f_2,f_3,f_4,f_5) \mapsto (f_1,f_2,f_3,f_5,f_4),
\]
clearly not a homothety of $K$: if ${f_4\neq f_5}$ then $\varphi(f_1,f_2,f_3,f_4,f_5)$ is not a multiple of $(f_1,f_2,f_3,f_4,f_5)$. Therefore the endomorphism induced on $E\cong K/ \im f$ by $\varphi$ is not a homothety of $E$ (the class of a $5$-uple of the same form is not mapped into a multiple of itself).
\end{Example}

\section{Vector bundles of low rank}
In this section we characterise monads whose cohomology is a vector bundle of rank lower than the dimension of $X$ and, in particular, we restrict to the case when $X$ is non-singular. Moreover, we will deal with the problem of simplicity and stability of this particular case.

Generalising Floystad's result, we start by proving the following theorem.
\begin{theorem}\label{lowerrankth}
Let $X$ be a non\h{singular}, \mbox{$n$-dimensional}, projective variety, embedded in $\PP^N$ by a very ample line bundle $L$. Let $M$ be a monad as in \eqref{genmonad} and $E$ its cohomology. If $E$ is a vector bundle of rank lower than $n$, then ${n=2k+1}$, ${\rk E = 2k}$ and the monad is of type
\begin{equation}\label{bundlemonad2}
\xymatrix@R=\linhaxylabels@C=\colunaxylabels{0\ar[r]&(L^\lor)^c\ar[r]^-{f}&\OO_{X}^{\,2k+2c}\ar[r]^-{g}&L^c\ar[r]&0.}
\end{equation}
Conversely, for each odd dimensional variety $X$ with an associated ACM embedding given by a line bundle $L$ and for each $c \geq 1$ there exists a vector bundle which is cohomology of a monad of type \eqref{bundlemonad2}.
\end{theorem}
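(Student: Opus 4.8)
The plan is to prove the two implications by different means: the forward statement by a Chern-class computation that is fed into an elementary generating-function argument, and the converse by pulling back a genuine instanton bundle from $\PP^n$. For the forward direction I would first read off the total Chern class of $E$ from the two short exact sequences of the monad display. Writing ${h=c_1(L)}$, the sequence ${0\to K\to\OO_X^{\,b}\to L^c\to0}$ gives ${c(K)=(1+h)^{-c}}$, and ${0\to(L^\vee)^a\to K\to E\to0}$ then yields
\[
c(E)=(1+h)^{-c}(1-h)^{-a}.
\]
Since $L$ is very ample, ${h^i\neq0}$ for every ${0\le i\le n}$, so each $c_i(E)$ is a nonzero multiple of $h^i$ exactly when the coefficient ${p_i:=[h^i]\,(1+h)^{-c}(1-h)^{-a}}$ is nonzero. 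As $E$ is locally free of rank ${r=b-a-c<n}$, we have ${c_i(E)=0}$ for ${i>r}$, hence ${p_i=0}$ for every $i$ in the nonempty range ${r+1\le i\le n}$.

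The heart of the matter is to extract the conclusion from these vanishings, and I would do this in two steps. First, to force ${a=c}$ I would analyse the sign of the coefficients: if ${a>c}$ then ${(1+h)^{-c}(1-h)^{-a}=(1-h)^{-(a-c)}(1-h^2)^{-c}}$ has strictly positive coefficients, while if ${c>a}$ then ${(1+h)^{-c}(1-h)^{-a}=(1+h)^{-(c-a)}(1-h^2)^{-a}}$ has coefficients whose sign alternates with $i$; in either case no $p_i$ vanishes, a contradiction (this also disposes of the cases ${a=0}$ or ${c=0}$, so the non-trivial monad has ${a,c\ge1}$). With ${a=c}$ the series collapses to ${(1-h^2)^{-c}}$, whence ${p_i=0}$ if and only if $i$ is odd. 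The vanishing of $p_i$ on the block of consecutive integers ${r+1,\dots,n}$ then forces that block to consist of odd numbers only; since two consecutive integers cannot both be odd, the block has length one. Thus ${n-r=1}$ and ${n=r+1}$ is odd, giving ${n=2k+1}$, ${\rk E=r=2k}$, and ${b=r+2c=2k+2c}$, which is precisely \eqref{bundlemonad2}.

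For the converse I would deliberately avoid Lemma \ref{existencelemma}: there the degeneracy locus of $f$ has expected codimension ${b-a-c+1=r+1=n}$, so a generic such $f$ drops rank at finitely many points and the cohomology fails to be locally free. Instead I would import an honest bundle from projective space. By the converse half of Theorem \ref{TeoFlo}, for ${n=2k+1}$ and any ${c\ge1}$ there is a monad ${\OO_{\PP^n}(-1)^c\to\OO_{\PP^n}^{\,2k+2c}\to\OO_{\PP^n}(1)^c}$ whose cohomology $F$ is a vector bundle of rank $2k$. Choosing, as in Remark \ref{rmk-pullback}, ${n+1}$ general (hence base-point-free) sections of $L$ produces a morphism ${\psi\colon X\to\PP^n}$ with ${\psi^*\OO_{\PP^n}(1)\cong L}$. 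Because the two maps of the instanton monad form a bundle monomorphism with locally free cokernel and a bundle epimorphism with locally free kernel, applying the exact functor $\psi^*$ preserves injectivity, surjectivity and the relation ${g\circ f=0}$; the pulled-back complex is therefore a monad of type \eqref{bundlemonad2} on $X$, and its cohomology is $\psi^*F$, locally free of rank $2k$.

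The step I expect to be the genuine obstacle is pinning down ${a=c}$ in the forward direction. The clean route is the factorisation above, which reduces the sign pattern of the coefficients of ${(1+h)^{-c}(1-h)^{-a}}$ to that of ${(1-h)^{-m}}$ or ${(1+h)^{-m}}$ multiplied by a series in $h^2$; once ${a=c}$ is established the parity bookkeeping is immediate. A secondary point worth flagging is the codimension count noted above, which is exactly why the converse must transport an actual instanton bundle rather than rely on a general member of the family produced by Lemma \ref{existencelemma}.
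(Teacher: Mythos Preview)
Your proof is correct, and the forward direction follows a genuinely different route from the paper's.

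The paper establishes $a=c$ and $b=2c+n-1$ by invoking the necessity half of Theorem~\ref{maintheorem}: since $\rk E<n$ rules out condition~(ii), both the monad and its dual must satisfy condition~(i), giving ${b\ge 2c+n-1}$ and ${b\ge 2a+n-1}$, which together with ${b\le a+c+n-1}$ force ${a=c}$ and ${b=2c+n-1}$. Only then does the paper compute ${c_t(E)=(1-l^2t^2)^{-c}}$ and use nonvanishing of $l^{n}$ to exclude even $n$. Your argument bypasses Theorem~\ref{maintheorem} entirely: the factorisations ${(1-h)^{-(a-c)}(1-h^2)^{-c}}$ and ${(1+h)^{-(c-a)}(1-h^2)^{-a}}$ show directly that no coefficient $p_i$ can vanish unless ${a=c}$, after which the parity pattern of ${(1-h^2)^{-c}}$ pins down ${n-r=1}$ and $n$ odd. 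This buys you something real: the necessity direction of Theorem~\ref{maintheorem} uses the ACM hypothesis, which is \emph{not} among the hypotheses of the forward implication of Theorem~\ref{lowerrankth}; your Chern-class argument is self-contained and needs only that $L$ is very ample (so that ${h^i\neq0}$ for ${i\le n}$).

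For the converse the two arguments are essentially the same. The paper's ``technique described in the proof of Lemma~\ref{existencelemma}'' amounts to working with $n+1$ sections of $L$ and importing Fl{\o}ystad's instanton monad from $\PP^n$; you make this explicit as a pullback along a finite morphism ${\psi\colon X\to\PP^n}$, exactly as in Remark~\ref{rmk-pullback}. Your caveat about Lemma~\ref{existencelemma} is well placed: in this borderline case the expected codimension ${b-a-c+1}$ of the degeneracy locus of $f$ is only $n$, so a generic member of that family has finitely many bad points and non--locally-free cohomology; one really does need to transport a specific instanton bundle, which is what both proofs do.
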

\begin{proof}
Suppose we have a vector bundle $E$ of rank lower than $\dim X = n$ which is the cohomology of a monad $M$ of type \eqref{genmonad}. Then its dual $E^\lor$ is the cohomology of the dual monad $M^\lor$. Since both $E$ and $E^\lor$ are vector bundles which do not satisfy condition \eqref{genmonad2} of Theorem \ref{maintheorem}, we must have
\[
b \geq 2c + n - 1 \:\:\:\mbox{and}\:\:\: b \geq 2a +n -1.
\]
On the other hand, the hypothesis $\rk E < n$ implies that
\[
b \leq a+c+n-1.
\]
Combining the three inequalities we get that
\[
a = c \:\:\:\mbox{and}\:\:\: b = 2c + n -1.
\]
Then the monad $M$ is of type
\[
\xymatrix@R=\linhaxylabels@C=\colunaxylabels{0\ar[r]&(L^\lor)^c\ar[r]^-{f}&\OO_{X}^{\,2c + n - 1}\ar[r]^-{g}&L^c\ar[r]&0,}
\]
therefore $\rk E = n-1$ which implies that $c_n(E)=0$.

Hence, since the Chern polynomial of $\OO_X$ is $c_t(\OO_X)=1$ (for $X$ is non\h{singular}), we have
\[
c_t(E)=\frac{1}{(1-lt)^c  (1+lt)^c}=(1+l^2t^2+l^4t^4+\cdots)^c,
\]
where $l$ denotes $c_1(L)$.  If $n=2k$, for some $k\in\ZZ$, then $c_{2k}(E)=\alpha_{2k}l^{2k}$, where $\alpha_{2k}>0$ is the binomial coefficient of the expansion of the series of $c_t(E)$. Observe that $l^{2k}=c_{2k}(L^{2k})$ and, by the projection formula (see \cite{Ful98}, Theorem 3.2 (c)), this Chern class cannot be zero, contradicting the assertion above. So we conclude that $n$ is odd and that the monad is of type \eqref{bundlemonad2}.

Conversely, for any $c\geq 1$ and $(2k+1)$-dimensional variety $X$, there exists a monad of type \eqref{bundlemonad2} whose cohomology is a vector bundle $E$ of rank $2k$ constructed using the technique described in the proof of Lemma \ref{existencelemma}.
\end{proof}

\subsection*{Minimal rank bundles defined using ``many'' global sections}
In Theorem \ref{maintheorem} we showed that the morphism $g$ in \eqref{genmonad} can be defined by a matrix $B$ whose entries are global sections of $L$ that span a subspace of $H^0(L)$ of dimension ${\min\big(b-2c+2,h^0(L)\big)}$. This was done by giving an example of such a matrix, but surely there are others. Moreover, the dimension of the subspace spanned by the entries of these matrices can be bigger as we shall see in the following examples.

Take the quadric hypersurface ${Q_3 \subset \PP^4}$, defined by the equation ${x_0^{\,2} + x_1^{\,2}+ x_2^{\,2} +x_3^{\,2} + x_4^{\,2}=0}$,  and $L = \OO_{Q_3}(2)$. Following the techniques used in Section \ref{SectionExistence} to construct a monad, we are able to obtain
\begin{equation}\label{ex-monquad}
\xymatrix@R=\linhaxylabels@C=\colunaxylabels{\OO_{Q_3}(-2)^2\ar[r]^-{A} & \OO_{Q_3}^{\,6}\ar[r]^-{B} & \OO_{Q_3}(2)^2,}
\end{equation}
where
\begin{equation}
A =
\begin{bmatrix}
-x_2^{\,2} & -x_3^{\,2} \\
0 & -x_2^{\,2} \\
 -x_3^{\,2} & 0\\
x_0^{\,2} & x_1^{\,2} \\
 0 & x_0^{\,2} \\
  x_1^{\,2} & 0
\end{bmatrix}
\quad\text{and}\quad
B = 
\begin{bmatrix}
x_0^{\,2} & x_1^{\,2} & 0 & x_2^{\,2} & x_3^{\,2} & 0\\
0 & x_0^{\,2} & x_1^{\,2} & 0 & x_2^{\,2} & x_3^{\,2}
\end{bmatrix}.
\end{equation}
We have $BA = 0$, and $A$ and $B$ have maximal rank when evaluated at every point of $Q_3$. Indeed, the rank of both $A$ and $B$ is not maximal only when evaluated at the point $(0:0:0:0:1) \in \PP^4$, that does not belong to the quadric.

In order to use more global sections in the matrices defining the monad, we could simply ``add another diagonal" whose entries involve an additional global section.
Unfortunately, this method will increase the rank of the sheaf. For example, take the monad
\begin{equation}
\xymatrix@R=\linhaxylabels@C=\colunaxylabels{\OO_{Q_3}(-2)^2\ar[r]^-{A'}&\OO_{Q_3}^{\,7}\ar[r]^-{B'}&\OO_{Q_3}(2)^2}
\end{equation}
given by the matrices
\begin{equation}
A' = 
\begin{bmatrix}
-x_2^{\,2} -x_3^{\,2} & -x_4^{\,2} \\
 -x_2^{\,2} &  -x_3^{\,2} \\
 0 & -x_2^{\,2} -x_4^{\,2}\\
x_0^{\,2} + x_1^{\,2} & 0 \\
 x_0^{\,2} & x_1^{\,2} \\
0 &  x_0^{\,2}\\
0 &  x_1^{\,2}
\end{bmatrix}
\quad\text{and}\quad
B' = 
\begin{bmatrix}
x_0^{\,2} & x_1^{\,2} & 0 & x_2^{\,2} & x_3^{\,2} & x_4^{\,2}& 0\\
0 & x_0^{\,2} & x_1^{\,2} & 0 & x_2^{\,2} & x_3^{\,2} & x_4^{\,2}
\end{bmatrix}
\end{equation}
with maximal rank evaluated at each point of $Q_3$. The cohomology of this monad is a rank $3$ vector bundle on the quadric.

Therefore, our goal is to construct examples of minimal rank vector bundles whose monads are defined by matrices using a number of independent global sections of $L$ strictly bigger than $\dim X +1$. Indeed, the monads obtained this way cannot be the pullback of some monad over a projective space via a finite morphism (as described in Remark \ref{rmk-pullback}).

In the following two examples we will achieve such a goal in the particular case of the quadric considered above. However, the technique is easily reproducible for other varieties. The key point is to consider two matrices such that the union of their respective standard determinantal varieties does not intersect the base variety.

We get such examples by slightly modifying the matrices $A$ and $B$. Consider a monad of type \eqref{ex-monquad} but defined by the matrices
\begin{equation}
A_1 = 
\begin{bmatrix}
-x_2^{\,2} & -x_3^{\,2} \\
0 & -x_2^{\,2} \\
 -x_3 x_4 & 0\\
x_0^{\,2} & x_1^{\,2} \\
 0 & x_0^{\,2} \\
  x_1^{\,2} & 0
\end{bmatrix}
\quad\text{and}\quad
B_1 = 
\begin{bmatrix}
x_0^{\,2} & x_1^{\,2} & 0 & x_2^{\,2} & x_3^{\,2} & 0\\
0 & x_0^{\,2} & x_1^{\,2} & 0 & x_2^{\,2} & x_3 x_4
\end{bmatrix}.
\end{equation}
Then, $B_1A_1 = 0$, and both $A_1$ and $B_1$ have maximal rank at every point of $\PP^4$ except at points $(0:0:0:1:0)$ and $(0:0:0:0:1)$, neither belonging to the quadric.

It is possible to insert an additional global section in the previous matrices, by considering, for example, the monad defined by the matrices
\begin{equation}
A_2 = 
\begin{bmatrix}
-x_2^{\,2} & -x_3^{\,2} \\
0 & -x_2^{\,2} \\
 -x_3 x_4 & 0\\
x_0^{\,2} & x_1^{\,2} \\
 0 & x_0^{\,2} \\
  x_1^{\,2} +x_1 x_4& 0
\end{bmatrix}
\quad\text{and}\quad
B_2 = 
\begin{bmatrix}
x_0^{\,2} & x_1^{\,2} & 0 & x_2^{\,2} & x_3^{\,2} & 0\\
0 & x_0^{\,2} & x_1^{\,2} + x_1 x_4 & 0 & x_2^{\,2} & x_3 x_4
\end{bmatrix}.
\end{equation}
Again, $B_2A_2 = 0$, and $A_2$ and $B_2$ have maximal rank when evaluated at all points of the projective space except at $(0:0:0:1:0)$, $(0:0:0:0:1)$ and $(0:1:0:0:-1)$, that do not belong to the quadric.

As we wanted, in both examples we used a number of global sections strictly bigger than ${\dim Q_3 +1}$; it would be interesting to determine all the possible matrices obtained with this technique, once fixed the base variety and the monad.

\subsection*{Simplicity and stability}
We note that it is straightforward to construct examples of vector bundles on $X$, with $\Pic X = \ZZ$, satisfying properties of simplicity and stability. In fact, it is enough to consider, as observed in Remark \ref{rmk-pullback}, $\dim X +1$ generic sections of $L$ in order to get a finite morphism $\varphi: X \rightarrow \PP^{2n+1}$. Using the \emph{flatness miracle} and the projection formula, it is possible to prove that $\varphi_* \OO_X$ is locally free and, moreover, $\varphi_* \OO_X = \bigoplus_{i=0}^p \OO_{\PP^{2n+1}}(-a_i)$, for some positive $p$ and non negative $a_i$'s (see \cite{BPV84}, Lemma I.17.2).
Finally, using once again the projection formula (to the cohomology bundle) as well as commutativity of the tensor product with the pullback, we can conclude that the pullback of a simple (respectively stable) bundle $E$ on $\PP^{2n+1}$ is a simple (respectively stable) bundle on the projective variety $X$.

Nevertheless, we always have the following property.

\begin{theorem}
Let $X$ be a variety of dimension $n$ and let $L$ be a line bundle on $X$. Suppose there is a linear system ${V\subseteq H^0(L)}$, with no base points, defining a morphism ${X\to\PP(V)}$ whose image ${X'\subset\PP(V)}$ satisfies ${h^2\big(\mathcal{I}_{X'}(-1)\big)=0}$ and at least one of the following conditions:
\begin{enumerate}
\item $X'$ is a projective ACM variety;
\item $X'$ is linearly normal and is not contained in a quadric hypersurface.
\end{enumerate}
Suppose in addition that there is a monad of type \eqref{bundlemonad2} over $X$ whose cohomology sheaf $E$ is locally free. Then $H^0(E)=0$.
\end{theorem}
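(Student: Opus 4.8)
The plan is to argue by contradiction. Assuming $H^0(E)\neq 0$, I would use a nonzero global section of $E$ to strip one trivial summand off the middle term of the monad, producing a genuine monad $0\to(L^\vee)^c\to\OO_X^{\,b-1}\to L^c\to0$ whose numerical data then violate the necessary conditions supplied by Theorem \ref{maintheorem} (when $X'$ is ACM) or Theorem \ref{notcutbyquadricstheorem} (in the linearly normal case). Throughout, write $b=2k+2c=2c+n-1$.

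First I would record the display of the monad \eqref{bundlemonad2}, with $K=\ker g$, giving the two short exact sequences $0\to(L^\vee)^c\xrightarrow{f}K\to E\to0$ and $0\to K\to\OO_X^{\,b}\xrightarrow{g}L^c\to0$. Exactly as in the proof of Proposition \ref{simpleprop}, twisting the ideal sheaf sequence of $X'$ in $\PP^N$ by $\OO(-1)$ and using the hypothesis $h^2\big(\mathcal{I}_{X'}(-1)\big)=0$ yields $H^1(L^\vee)=0$, hence $H^1\big((L^\vee)^c\big)=0$. The long exact sequence of the first row then shows that the induced map $H^0(K)\to H^0(E)$ is surjective.

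Now suppose $0\neq s\in H^0(E)$ and lift it to $\tilde s\in H^0(K)\subseteq H^0(\OO_X^{\,b})=k^b$, a constant and necessarily nonzero vector satisfying $B\tilde s=0$, where $B$ is the matrix of $g$. Since $\tilde s$ is nowhere zero, $\OO_X\tilde s$ is a trivial line subbundle of $\OO_X^{\,b}$ with quotient $q\colon\OO_X^{\,b}\to\OO_X^{\,b}/\OO_X\tilde s\cong\OO_X^{\,b-1}$, and $g(\OO_X\tilde s)=\OO_X\,g(\tilde s)=0$ lets $g$ factor as $g=\bar g\circ q$ with $\bar g$ surjective. Putting $\bar f=q\circ f$, the complex $0\to(L^\vee)^c\xrightarrow{\bar f}\OO_X^{\,b-1}\xrightarrow{\bar g}L^c\to0$ satisfies $\bar g\bar f=gf=0$; moreover $\bar f$ is injective as a morphism of sheaves, because $\ker\bar f$ is a subsheaf of the torsion-free sheaf $(L^\vee)^c$ that vanishes on the dense open locus where $s\neq0$ (there $\tilde s(x)\notin\im f(x)$, so $\im f\cap\OO_X\tilde s=0$). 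Hence this reduced complex is again a monad, of the form treated in Theorem \ref{maintheorem}, with parameters $a=c$ and middle term of rank $b-1=2c+n-2$.

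By the necessity direction of Theorem \ref{maintheorem} (respectively Theorem \ref{notcutbyquadricstheorem}, according to whether $X'$ is ACM or linearly normal and not contained in a quadric), the existence of this monad forces either $2c+n-2\ge 2c+n-1$ (the binding inequality in condition (i)) or $2c+n-2\ge 2c+n$ (condition (ii)), both absurd. This contradiction gives $H^0(E)=0$. The step I expect to demand the most care is the claim that the reduced complex is still a monad when $s$ vanishes, which it generically does, since $\rk E=n-1$ makes the zero locus of a section have expected dimension one; the point is that the definition of monad requires $\bar f$ only to be \emph{injective as a morphism of sheaves}, so that the reduced cohomology need not be locally free, and torsion-freeness of $(L^\vee)^c$ is precisely what forces $\ker\bar f=0$. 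A secondary point to verify is that $H^0(\OO_X)=k$, so that the lift $\tilde s$ is an honest constant vector; this holds because $X$ is a projective variety.
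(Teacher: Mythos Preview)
Your argument is correct and follows essentially the same strategy as the paper: lift a hypothetical nonzero section of $E$ to $H^0(K)$, use it to strip a trivial summand from the middle term of the monad, and contradict the necessary conditions of Theorem~\ref{maintheorem} or Theorem~\ref{notcutbyquadricstheorem}. The only difference is in how the reduction is carried out: the paper quotes a lemma on Steiner bundles from \cite{AMS16} to obtain a splitting $K\simeq K'\oplus\OO_X^{\delta}$ with $\delta=h^0(K)$ and thus removes all trivial summands at once, whereas you perform the reduction by hand for a single section, using torsion-freeness of $(L^\vee)^c$ to verify injectivity of $\bar f$. Your version is more self-contained and avoids the external reference; the paper's version is slightly slicker once that lemma is available. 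Both land on the same numerical contradiction.
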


\begin{proof}
From the hypotheses, we see that $X$ satisfies the conditions in Theorem \ref{maintheorem} or Theorem \ref{notcutbyquadricstheorem}. A monad of type \eqref{bundlemonad2} over $X$ admits the following display:
\begin{equation*}
\xymatrix@R=\linhaxylabels@C=\colunaxylabels{
&&0\ar[d]&0\ar[d]\\
0\ar[r]&(L^\vee)^c\ar[r]\ar@{=}[d]&K\ar[r]\ar[d]&E\ar[r]\ar[d]&0\\
0\ar[r]&(L^\vee)^c\ar[r]^-{f}&\OO_{X}^{\, 2k+2c}\ar[r]\ar[d]^-{g}&Q\ar[r]\ar[d]&0\\
&&L^c\ar[d]\ar@{=}[r]&L^c\ar[d]\\
&&0&0
}
\end{equation*}
Taking cohomology on the exact sequence 
\[
\xymatrix@R=\linhaxy@C=\colunaxy{0\ar[r]&\mathcal{I}_{X'}(-1)\ar[r]&{\OO_{\PP^N}}(-1)\ar[r]&\morphismtopn_*L^\vee\ar[r]&0,}
\]
we get that ${h^0(L^\vee)=h^1(L^\vee)=0}$, since ${h^1\big(\mathcal{I}_{X'}(-1)\big)=h^2\big(\mathcal{I}_{X'}(-1)\big)=0}$. Therefore, taking cohomology on the first row of this display we have that $H^0(E) = H^0(K)$. Let us suppose that ${H^0(K) \ne 0}$, and let ${\delta=h^0(K)}$. Applying Lemma 1.6 in \cite{AMS16}, we see that $K \simeq K' \oplus \OO_X^{\delta}$, since $K^\vee$ is an \mbox{$(L^\vee,\OO_X)$-Steiner} bundle (see Definition 1.3 in \cite{AMS16}). Therefore, the matrix defining $g$, with a suitable change of variables, may be assumed to have $\delta$ zero columns. So, again by Lemma 1.6,  $(K')^\vee$ is itself an \mbox{$(L^\vee,\OO_X)$-Steiner} bundle, sitting on a short exact sequence
\[
\xymatrix@R=\linhaxy@C=\colunaxy{0\ar[r]&(L^\vee)^c\ar[r]&\OO_{X}^{\,2k+2c-\delta}\ar[r]&(K')^\vee\ar[r]&0.}
\]
Dualising this, we get
\[
\xymatrix@R=\linhaxy@C=\colunaxy{0\ar[r]&K'\ar[r]&\OO_{X}^{\,2k+2c-\delta}\ar[r]&L^c\ar[r]&0}
\]
with $H^0(K')=0$. Therefore, we would get a new monad, whose cohomology might be a sheaf, defined as
\[
\xymatrix@R=\linhaxy@C=\colunaxy{0\ar[r]&(L^\lor)^c\ar[r]&\OO_{X}^{\,2k+2c-\delta}\ar[r]&L^c\ar[r]&0.}
\]
But this contradicts the conditions of existence of Theorem \ref{maintheorem} and Theorem \ref{notcutbyquadricstheorem}, thus proving the statement.
\end{proof}
\begin{corollary}
Every rank 2 vector bundle $E$ on a three dimensional ACM smooth projective variety $X$ with $\Pic(X)=\ZZ$, defined by a monad of type \eqref{bundlemonad2}, is stable.
\end{corollary}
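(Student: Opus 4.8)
The plan is to use the preceding theorem together with the standard Hoppe-type criterion for stability of rank $2$ bundles on a threefold with Picard group $\ZZ$. Recall that when $\Pic(X)=\ZZ$ is generated by (the class of) $L$, a rank $2$ bundle $E$ is stable if and only if $H^0\big(E_{\mathrm{norm}}\big)=0$, where $E_{\mathrm{norm}}$ is the twist of $E$ by a power of $L$ normalising its first Chern class to lie in $\{-1,0\}$ depending on parity; equivalently, stability for rank $2$ reduces to the vanishing of global sections of $E$ and of its suitably normalised twist, since a destabilising subsheaf of a rank $2$ bundle is a rank $1$ reflexive subsheaf, hence a twist $L^m$ of the trivial bundle, and an inclusion $L^m\hookrightarrow E$ is exactly a nonzero section of $E\otimes L^{-m}$.

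First I would record that $X$ being three-dimensional, ACM, smooth, with $\Pic(X)=\ZZ$ places us squarely in the setting of the theorem immediately above: in particular $X'$ is a projective ACM variety, so condition (1) there holds, and one must check the hypothesis $h^2\big(\mathcal{I}_{X'}(-1)\big)=0$. Since $X$ is ACM one has $H^1_*\big(\PP^N,\mathcal{I}_{X'}\big)=0$ and $H^i_*(X,\OO_X)=0$ for $0<i<n$; the needed vanishing $h^2\big(\mathcal{I}_{X'}(-1)\big)=0$ follows from the long exact sequence of $0\to\mathcal{I}_{X'}(-1)\to\OO_{\PP^N}(-1)\to\OO_{X'}(-1)\to0$ together with $H^1\big(\OO_{X'}(-1)\big)=0$ (an instance of ACM vanishing $H^i_*(\OO_{X'})=0$ for $0<i<n$) and $H^2\big(\OO_{\PP^N}(-1)\big)=0$. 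With these hypotheses verified, the theorem gives directly that $H^0(E)=0$ for any rank $2$ bundle $E$ arising as the cohomology of a monad of type \eqref{bundlemonad2}.

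Next I would exploit the self-duality forced by rank $2$. A rank $2$ bundle $E$ with $c_1(E)=0$ satisfies $E^\vee\cong E$ (as $E^\vee\cong E\otimes(\det E)^{-1}$ and here $\det E=\OO_X$ by the Chern class computation $c_t(E)=\bigl(1+l^2t^2\bigr)^c$ in the proof of Theorem \ref{lowerrankth}, which gives $c_1(E)=0$). Moreover, the dual monad of a type \eqref{bundlemonad2} monad is again of type \eqref{bundlemonad2} (with $a=c$, the form is visibly self-dual up to relabelling), and its cohomology is $E^\vee$. Hence the theorem applies equally to $E^\vee$, giving $H^0(E^\vee)=0$, and since $E^\vee\cong E$ this is automatic, but the key consequence is $H^0(E)=0$ with $c_1(E)=0$.

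Finally I would assemble stability. Because $\Pic(X)=\ZZ$, any rank $1$ subsheaf destabilising $E$ is, after reflexive hull, of the form $L^m$ with $\mu(L^m)\ge\mu(E)$; as $c_1(E)=0$ gives $\mu(E)=0$, a destabilising sub requires $m\ge0$ and a nonzero map $L^m\hookrightarrow E$, i.e.\ a nonzero section of $E\otimes L^{-m}=E(-m)$. For $m=0$ this is ruled out by $H^0(E)=0$; for $m>0$, a section of $E(-m)$ induces a section of $E$ (composing the inclusion $L^m\hookrightarrow E$ with $\OO_X\hookrightarrow L^m$, using that $L$ is effective), again contradicting $H^0(E)=0$. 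Hence no destabilising subsheaf exists and $E$ is stable. The main obstacle I anticipate is the bookkeeping around the normalisation and the reduction of ``destabilising subsheaf'' to ``global section of a twist'': one must justify that a rank $1$ saturated subsheaf of the rank $2$ bundle $E$ is reflexive of rank $1$ and hence a line bundle $L^m$ (using smoothness, so that rank $1$ reflexive sheaves are invertible, and $\Pic(X)=\ZZ$), and that the slope inequality forces $m\ge0$ so that the section of $E(-m)$ genuinely produces a section of $E$. Everything else is a direct citation of the preceding theorem and the elementary Chern-class identity $c_1(E)=0$.
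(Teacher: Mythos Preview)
Your proof is correct and follows essentially the same approach as the paper: invoke the preceding theorem to obtain $H^0(E)=0$, then apply Hoppe's criterion for rank~$2$ bundles with $c_1(E)=0$ on a variety with $\Pic(X)=\ZZ$. Your verification that the ACM hypothesis automatically gives $h^2\big(\mathcal{I}_{X'}(-1)\big)=0$ is a useful detail the paper leaves implicit; the paragraph on self-duality and $E^\vee$ is harmless but unnecessary, since $H^0(E)=0$ together with $c_1(E)=0$ already suffices for Hoppe's criterion.
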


\begin{proof}
The result follows directly from the previous theorem and the Hoppe's criterion for stability (see \cite{Hop84}, Theorem 12).
\end{proof}

\section{The set of monads and the moduli problem}

The existence part in Theorem \ref{maintheorem} is proved by explicitly constructing a monad on a given projective variety $X$. The construction therein does not, however, give an answer to the question of ``how many'' monads of type \eqref{genmonad} exist.
We would like to know more about the algebraic structure of the set of pairs of morphisms which define a monad over a projective variety. In the case of the projective space we prove the following.

\begin{theorem}\label{theorem2}
Let $a,b,c$ satisfy the conditions of Theorem \ref{TeoFlo}, and suppose that ${1\le c\le 2}$. Then for any surjective morphism ${g\in\Hom\big(\OO_\pn^{\,b},\opn(1)^c\big)}$ there is a morphism ${f\in\Hom\big(\opn(-1)^a,\OO_\pn^{\,b}\big)}$ yielding a monad of type \eqref{monadFlo}.

Furthermore, the set of pairs ${(f,g)\in\Hom\big(\opn(-1)^a,\OO_\pn^{\,b}\big)\times \Hom\big(\OO_\pn^{\,b},\opn(1)^c\big)}$ yielding such a monad is an irreducible algebraic variety.
\end{theorem}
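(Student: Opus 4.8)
The plan is to establish the two claims separately, using the structure theory for monads over $\pn$ established in Fl\o{}ystad's work. For the first claim, I would fix a surjective morphism $g\in\Hom\big(\OO_\pn^{\,b},\opn(1)^c\big)$ and show that its kernel $K=\ker g$ always admits an appropriate subbundle inclusion. Since $g$ is surjective, $K$ is a vector bundle of rank $b-c$ sitting in the exact sequence
\begin{equation*}
\xymatrix@R=\linhaxy@C=\colunaxy{0\ar[r]&K\ar[r]&\OO_\pn^{\,b}\ar[r]^-{g}&\opn(1)^c\ar[r]&0.}
\end{equation*}
Finding $f$ yielding a monad amounts to finding an injective bundle map $\opn(-1)^a\to K$ whose cokernel is torsion-free (equivalently, whose degeneracy locus has codimension at least two, i.e.\ the cohomology sheaf $E$ is a sheaf). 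The key point is that under the numerical conditions of Theorem \ref{TeoFlo}, such an $f$ exists for \emph{every} surjective $g$, not merely for a general one; I would prove this by twisting the above sequence and using the hypothesis $1\le c\le2$ to control the relevant cohomology of $K(1)$, guaranteeing that $\Hom\big(\opn(-1)^a,K\big)=H^0\big(K(1)^a\big)$ is large enough and that a generic element of it degenerates in the expected codimension $b-a-c+1\ge2$.

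For the irreducibility claim, I would exhibit the set of admissible pairs $(f,g)$ as the total space of a suitable fibration over an irreducible base. First let $U\subset\Hom\big(\OO_\pn^{\,b},\opn(1)^c\big)$ be the locus of surjective morphisms $g$; this is a nonempty open subset of an affine space, hence irreducible. The first claim shows the projection from the set of monad pairs onto $U$ is surjective. Over each $g\in U$ the fibre is the set of maps $f\in\Hom\big(\opn(-1)^a,K\big)=H^0\big(K(1)^a\big)$ for which $(f,g)$ is a monad, which is an open subset of the vector space $H^0\big(K(1)^a\big)$. The crucial structural input is that the dimension $h^0\big(K(1)\big)$ should be constant as $g$ varies over $U$: if this holds, the spaces $H^0\big(K(1)^a\big)$ assemble into a vector bundle over $U$, whose total space is irreducible, and the monad locus is a dense open subset thereof, hence irreducible.

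The main obstacle is precisely verifying that $h^0\big(K(1)\big)$ is constant on $U$, which is where the restriction $1\le c\le2$ is doing the real work. For arbitrary $c$ the kernel bundle $K$ can vary in moduli and its cohomology can jump, so the fibration argument breaks down; the whole point of keeping $c$ small is that $K(1)$ has controlled, rigid cohomology. Concretely I would twist the defining sequence to
\begin{equation*}
\xymatrix@R=\linhaxy@C=\colunaxy{0\ar[r]&K(1)\ar[r]&\OO_\pn(1)^{\,b}\ar[r]&\opn(2)^c\ar[r]&0}
\end{equation*}
and take cohomology: since $H^0\big(\opn(1)\big)$ and $H^0\big(\opn(2)\big)$ have fixed dimensions and $H^1\big(\opn(1)\big)=0$, the value $h^0\big(K(1)\big)$ is pinned down once one checks that the connecting map $H^0\big(\opn(1)^b\big)\to H^0\big(\opn(2)^c\big)$ has rank independent of $g$. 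For $c=1$ this map is a single row of linear forms and the rank computation is immediate; for $c=2$ it requires a short argument using surjectivity of $g$ (equivalently, that the $2\times b$ matrix of linear forms has no common zero), and I expect this to be the technical heart of the proof. Once constancy of $h^0\big(K(1)\big)$ is secured, semicontinuity upgrades to local freeness of the direct image sheaf, and the irreducibility follows formally.
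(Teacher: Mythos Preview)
Your overall architecture matches the paper's proof almost exactly: fix a surjective $g$, set $K_g=\ker g$, show $h^0\big(K_g(1)\big)$ is constant and at least $a$, then use the fibration over the irreducible open set of surjective $g$'s with irreducible fibres of fixed dimension. The paper carries this out just as you outline.

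The one substantive difference is in how the constancy of $h^0\big(K_g(1)\big)$ is obtained. You propose to analyse the rank of the map $H^0\big(\opn(1)^b\big)\to H^0\big(\opn(2)^c\big)$ directly, treating $c=1$ and $c=2$ separately. The paper instead invokes Castelnuovo--Mumford regularity: by Theorem~3.2 of \cite{CMR07}, the kernel $K_g$ is $m$-regular for every $m\ge c$, so with $c\le 2$ one gets $h^1\big(K_g(1)\big)=0$ immediately, hence $h^0\big(K_g(1)\big)=b(n+1)-c\binom{n+2}{2}$ for every surjective $g$. This is equivalent to what you want (surjectivity of $H^0(g(1))$), but it handles both cases uniformly and avoids the ad~hoc $c=2$ argument you anticipate as ``the technical heart''. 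Your direct route would also succeed, but the regularity citation is the cleaner device here.

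One minor over-complication: for the complex to be a monad you only need $f$ to be injective as a morphism of sheaves, i.e.\ the $a$ elements of $H^0\big(K_g(1)\big)$ are linearly independent. You do not need the cokernel of $f$ to be torsion-free or the degeneracy locus to have codimension $\ge 2$; the cohomology $E=\ker g/\im f$ is automatically a coherent sheaf. So the existence of $f$ reduces to the inequality $h^0\big(K_g(1)\big)\ge a$, which the paper checks from the numerical conditions of Theorem~\ref{TeoFlo}, and the fibre over $g$ is simply the open set of injective maps in $H^0\big(K_g(1)\big)^a$.
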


\begin{proof}
Let ${g\in\Hom\big(\OO_\pn^{\,b},\opn(1)^c\big)}$ be a surjective morphism and let ${K_g:=\ker g}$. Then for any injective morphism ${f\in H^0\big(K_g(1)\big)^a}$, the pair $(f,g)$ yields a monad of type \eqref{monadFlo}. If we consider the exact sequence
\[
\xymatrix@R=\linhaxylabels@C=\colunaxylabels{0\ar[r]&K_g\ar[r]&\OO_\pn^{\,b}\ar[r]^-{g}&\opn(1)^c\ar[r]&0,}
\]
tensor by ${\opn(1)}$ and take cohomology we can see that ${h^0\big(K_g(1)\big)=b(n+1)-c\tbinom{n+2}{2}+h^1\big(K_g(1)\big)}$. Now following the arguments in the proof of Theorem 3.2 in \cite{CMR07} we see that $K_g$ is \mbox{$m$-regular} for any ${m\ge c}$. Therefore, since ${c\le2}$, $K_g$ is \mbox{$2$-regular}, i.e.\ ${h^1\big(K_g(1)\big)=0}$. Since an injective morphism ${f\in H^0\big(K_g(1)\big)^a}$ comes from a choice of $a$ independent elements in $H^0\big(K_g(1)\big)$, we wish to show that ${h^0\big(K_g(1)\big)\ge a}$, i.e. ${b(n+1)-c\tbinom{n+2}{2}-a\ge0}$. We can check that the conditions in Therorem \ref{TeoFlo} imply this inequality.

The irreducibility of the set of pairs ${(f,g)}$ that yield a monad of type \eqref{monadFlo} comes from the fact that the subset of surjective morphisms ${g\in\Hom\big(\OO_\pn^{\,b},\opn(1)^c\big)}$ is irreducible and when we consider the projection
\[
\xymatrix@R=\linhaxy@C=\colunaxy{\Hom\big(\opn(-1)^a,\OO_\pn^{\,b}\big)\times \Hom\big(\OO_\pn^{\,b},\opn(1)^c\big)
\ar[r]&\Hom\big(\OO_\pn^{\,b},\opn(1)^c\big),}
\]
its fibre at a point corresponding to the surjective morphism $g$ is the irreducible set of injective morphisms in ${H^0\big(K_g(1)\big)^a}$, which has fixed dimension ${a\left(b(n+1)-c\tbinom{n+2}{2}\right)}$.
\end{proof}

Before discussing the more general setting of monads on ACM smooth projective varieties we give an example of reducibility with $c=5$ on the projective space.

\begin{Example}
Consider the set of instanton bundles defined by a monad of the form
\begin{equation*}
\xymatrix@R=\linhaxylabels@C=\colunaxylabels{0\ar[r]&\OO_{\PP^{3}}(-1)^5\ar[r]^-{f}&\OO_{\PP^{3}}^{\,12}\ar[r]^-{g}&\OO_{\PP^{3}}(1)^5\ar[r]&0.}
\end{equation*}
It was proved in \cite{JMT15} that the moduli space of instanton sheaves of rank $2$ and charge $5$ is reducible. Furthermore, the set of pairs ${(f,g)\in\Hom\big(\OO_{\PP^3}(-1)^5,\OO_{\PP^3}^{\,12}\big)\times \Hom\big(\OO_{\PP^3}^{\,12},\OO_{\PP^3}(1)^5\big)}$ yielding such a monad is a reducible algebraic variety. \end{Example}

\subsection*{The general setting.}

When $X$ is a projective variety the general setting is the following. Let $X$ be a projective variety embedded on $\PP^N$ by a very ample line bundle $L$. Consider the set of all morphisms $g : \OO_X^{\,b} \rightarrow L^c$, described by the vector space $B^* \otimes C \otimes H^0(L)$, where $B$ and $C$ are, respectively, $k$-vector spaces of dimensions $b$ and $c$.

Denote ${\PP\big(B^* \otimes C \otimes H^0(L)\big)}$ by $\PP$ and consider the map
\begin{equation}\label{tautMor}
\xymatrix@R=\linhaxy@C=\colunaxy{\OO_\PP(-1) \ar[r] & 
  B^* \otimes C \otimes H^0(L) \otimes \OO_\PP}
\end{equation}
of sheaves over $\PP$, whose fiber at a point in $\PP\big(B^* \otimes C \otimes H^0(L)\big)$ corresponds to the natural inclusion. So, from \eqref{tautMor} we get a map
\[
\xymatrix@R=\linhaxy@C=\colunaxy{B \otimes H^0(L) \otimes \OO_\PP(-1) \ar[r] & 
  C \otimes H^0(L) \otimes H^0(L) \otimes \OO_\PP,}
\]
and hence also a map
\[
\xymatrix@R=\linhaxylabels@C=\colunaxylabels{
  B \otimes H^0(L) \otimes \OO_\PP(-1) \ar[r]^-{\varphi} & 
  C \otimes H^0(L \otimes L) \otimes \OO_\PP,}
\]
induced by the natural morphism ${H^0(L) \otimes H^0(L) \to H^0(L\otimes L)}$.

Now recall that $h^0(L) = N+1$ and suppose that $a,b,c$ are positive integers that satisfy the conditions of Theorem \ref{maintheorem}. Then the degeneracy locus
\[
Z = \left\{ g \in \PP\big(B^* \otimes C \otimes H^0(L)\big) \:|\: \rk_g (\varphi) \leq b(N+1)-a \right\}
\]
describes the set of morphisms $g$ in a short exact sequence
\[
\xymatrix@R=\linhaxylabels@C=\colunaxylabels{
  0 \ar[r] & K_g \ar[r] & \OO_{X}^{\,b}  \ar[r]^-{g} &  L^c \ar[r] & 0}
\]
such that $h^0(K_g \otimes L) \geq a$ and for which it is thus possible to construct a monad of type \eqref{genmonad}.
Note furthermore that
\[
\codim Z \leq a\big(c \tbinom{N+2}{2} - b(N+1) +a \big).
\]
Hence, whenever $Z$ is irreducible (for example when $\codim Z <0$) and $h^0(K_g)$ is constant for every morphism $g$ we see that the set of the pairs $(f,g)$ yielding a monad \eqref{genmonad} on $X$ is an irreducible algebraic variety. In this case, Theorem \ref{theorem2} can be extended to ACM varieties.

\subsection*{The moduli space of vector bundles of low rank.}
Let $X$ be an ACM smooth projective variety of odd dimension $2k+1$, for some $k\in\mathbb{N}$, with an embedding in $\PP^N$ by a very ample line bundle $L$ on $X$, where ${h^0(L)=N+1}$.

Consider the set $\mathcal{V}_{2k,c}$ of rank $2k$ vector bundles which are the cohomology of a monad of type
\begin{equation}\label{bundlemonad1}
\xymatrix@R=\linhaxylabels@C=\colunaxylabels{0\ar[r]&(L^\lor)^c\ar[r]^-{f}&\OO_{X}^{\,2k+2c}\ar[r]^-{g}&L^c\ar[r]&0,}
\end{equation}
with $1\leq c\leq 2$.

\begin{Remark}\label{mon-iso-vb}
Observe that the hypotheses in Corollary 1, \S 4 Chapter 2, in \cite{OSS80}, hold for monads defined by \eqref{bundlemonad1}. Hence the isomorphisms of monads of this type correspond bijectively to the isomorphisms of the corresponding cohomology bundles. In particular, the two categories are equivalent and we will not distinguish between their corresponding objects.
\end{Remark}

We want to construct a moduli space $M(\mathcal{V}_{2k,c})$ of vector bundles in $\mathcal{V}_{2k,c}$.
In order to do this we will use King's framework of moduli spaces of representations of finite dimensional algebras in \cite{Kin94}.

We first note that according to \cite{JP15}, Theorem 1.3, the category $\mathcal{M}_{k,c}$ of monads of type \eqref{bundlemonad1} is equivalent to the full subcategory $\mathcal{G}^{gis}_{k,c}$ of the category $\mathcal{R}(Q_{k,c})$ of representations \linebreak ${R=\big(\{\CC^c,\CC^{2k+2c},\CC^c\},\{A_i\}_{i=1}^{N+1},\{B_j\}_{i=1}^{N+1}\big)}$ of the quiver $Q_{k,c}^{\ }$ of the form
\begin{equation*}
\xymatrix@C=3em@R=0.2em{
\\
\bullet \ar@<1.5ex>[r] \ar@{}[r]|{\ldots} \ar@<-1.5ex>[r]_{(N+1)}&\bullet \ar@<1.5ex>[r] \ar@{}[r]|{\ldots} \ar@<-1.5ex>[r]_{(N+1)} &\bullet }
\end{equation*}
which are $(\sigma,\gamma)$-globally injective and surjective and satisfy
\begin{equation}\label{fg=0}
\sum(B_iA_j+B_jA_i)\otimes(\sigma_i\gamma_j)=0.
\end{equation}
Let us briefly recall here the definitions of $(\sigma,\gamma)$-globally injective and surjective (see \cite{JP15} for more details).
Given a monad as in \eqref{bundlemonad1}, choose bases $\gamma=(\gamma_1,\ldots,\gamma_{N+1})$ of $\Hom(L^\vee,\mathcal{O}_X)$ and $\sigma=(\sigma_1,\ldots,\sigma_{N+1})$ of $\Hom(\mathcal{O}_X,L)$. Set
\[
\alpha=\sum_{i=1}^{N+1}A_i\otimes\gamma_i\quad \text{and} \quad
  \beta=\sum_{j=1}^{N+1}B_j\otimes\sigma_j.
\]
The monad conditions of injectivity of $f$ and surjectivity of $g$ are reinterpreted in the language of the associated representation ${R=\big(\{\CC^c,\CC^{2k+2c},\CC^c\},\{A_i\}_{i=1}^{N+1},\{B_j\}_{i=1}^{N+1}\big)}$ in $\mathcal{G}^{gis}_{k,c}$ as, respectively, ${\alpha(P)=\sum_{i=1}^{N+1}A_i\otimes\gamma_i(P)}$ is injective and ${\beta(P)=\sum_{j=1}^{N+1}B_j\otimes\sigma_j(P)}$ is surjective, for all ${P\in X}$. In this case, we say that $R$ is $(\sigma,\gamma)$-globally injective and surjective. The monad condition ${g\circ f=0}$ is rewritten as in \eqref{fg=0}.

For the sake of simplicity, we will write $R=(c,2k+2c,c)$ when we refer to the representation ${R=(\{\CC^c,\CC^{2k+2c},\CC^c\},\{A_i\},\{B_j\})}$.
The notion of semistability for representations in $\mathcal{G}^{gis}_{k,c}$, as defined by King, is the following: a representation ${R=(c,2k+2c,c)}$ is \emph{$\lambda$-semistable} is there is a triple ${\lambda=(\lambda_1,\lambda_2,\lambda_3)\in\ZZ^3}$ such that
\begin{eqnarray*}
\langle(\lambda_1,\lambda_2,\lambda_3),(c,2k+2c,c)\rangle= 0, \\
\langle(\lambda_1,\lambda_2,\lambda_3),(a',b',c')\rangle\geq 0,
\end{eqnarray*}
for all subrepresentations ${R'=(a',b',c')}$ of the representation $R$ ($\langle\cdot,\cdot\rangle$ denotes the usual dot product). The representation is \emph{$\lambda$-stable} if the only subrepresentations $R'$ with ${\langle(\lambda_1,\lambda_2,\lambda_3),(a',b',c')\rangle= 0}$ are $R$ and $0$.

Moreover, by King's central result (\cite{Kin94}, Teorem 4.1), the existence of such a $\lambda$ guarantees the existence of a coarse moduli space for families of $\lambda$-semistable representations up to $S$-equivalence (two $\lambda$-semistable representations are $S$-equivalent if they have the same composition factors in the full abelian subcategory of $\lambda$-semistable representations).

Given the equivalences of the categories $\mathcal{M}_{k,c}$ and $\mathcal{G}^{gis}_{k,c}$, and after Remark \ref{mon-iso-vb}, we see that we can define a moduli space $\mathcal{M}(\mathcal{V}_{2k,c})$ whenever we can construct a moduli space of the abelian category $\mathcal{G}^{gis}_{k,c}$.

When $c=1$ we prove:

\begin{theorem}\label{modulispace}
There is a coarse moduli space $\mathcal{M}(\mathcal{V}_{2k,1})$ of $\lambda$-semistable vector bundles in $\mathcal{V}_{2k,1}$.
\end{theorem}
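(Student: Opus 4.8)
The plan is to reduce the construction of a moduli space for the vector bundles in $\mathcal{V}_{2k,1}$ to King's general theorem (\cite{Kin94}, Theorem 4.1) by exhibiting an explicit stability weight $\lambda$. By the equivalences recalled above (the equivalence $\mathcal{M}_{k,1}\simeq\mathcal{G}^{gis}_{k,1}$ from \cite{JP15} together with Remark~\ref{mon-iso-vb}), it suffices to construct a coarse moduli space of $\lambda$-semistable representations in the abelian category $\mathcal{G}^{gis}_{k,1}$. Since $c=1$, the dimension vector of every representation is $(1,2k+2,1)$, so the key is to find a single triple $\lambda=(\lambda_1,\lambda_2,\lambda_3)\in\ZZ^3$ with $\langle(\lambda_1,\lambda_2,\lambda_3),(1,2k+2,1)\rangle=0$ and with $\langle\lambda,(a',b',c')\rangle\ge 0$ for every subrepresentation $R'=(a',b',c')$.

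First I would determine which triples $(a',b',c')$ actually occur as dimension vectors of subrepresentations $R'\subseteq R$. A subrepresentation must be closed under the arrow maps $A_i$ (sending the first node $\CC^1$ to the middle $\CC^{2k+2}$) and $B_j$ (sending the middle node to the last $\CC^1$). Because $R$ is $(\sigma,\gamma)$-globally injective, the combined map $\alpha$ is fibrewise injective, which forces that a subrepresentation with $a'=1$ must have $b'\ge 1$; likewise global surjectivity of $\beta$ constrains the last component. The possible dimension vectors thus range over $a'\in\{0,1\}$, $0\le b'\le 2k+2$, $c'\in\{0,1\}$, subject to the compatibility that whenever $a'=1$ the image of the first node lies in the middle component (so $b'\ge1$) and whenever $b'$ meets the image of the $B_j$ the last component is forced to be $1$. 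Enumerating these finitely many cases is the concrete bookkeeping step.

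Next I would solve the resulting finite system of linear inequalities for $\lambda$. The normalisation $\lambda_1+(2k+2)\lambda_2+\lambda_3=0$ gives one equation; the inequalities $\langle\lambda,(a',b',c')\rangle\ge 0$ over the admissible $(a',b',c')$ cut out a cone in $\ZZ^3$, and I expect a symmetric choice such as $\lambda=\big(2k+1,-1,2k+1\big)$ (or a suitable integer rescaling) to satisfy all of them, with equality holding only on the extreme subrepresentations. I would verify that with this $\lambda$ the only subrepresentations $R'$ achieving $\langle\lambda,(a',b',c')\rangle=0$ are $0$ and $R$ itself, so that $\lambda$-semistability in fact coincides with $\lambda$-stability on this dimension vector; this is what guarantees that the corresponding points are genuine (not merely $S$-equivalence classes) and that $\mathcal{G}^{gis}_{k,1}$ sits inside the category of $\lambda$-semistable objects.

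Once such a $\lambda$ is produced, King's theorem immediately yields a coarse moduli space for $\lambda$-semistable representations of $Q_{k,1}$ of dimension vector $(1,2k+2,1)$ up to $S$-equivalence; restricting to the open locus satisfying the quadratic relation~\eqref{fg=0} and the global injectivity/surjectivity conditions that define $\mathcal{G}^{gis}_{k,1}$, and transporting along the equivalences of categories, gives the desired $\mathcal{M}(\mathcal{V}_{2k,1})$. The main obstacle I anticipate is the middle step: exhibiting a valid $\lambda$ and, above all, checking that \emph{every} subrepresentation satisfies the semistability inequality. The danger is a subrepresentation concentrated in the middle node (type $(0,b',0)$), since the $\beta$-surjectivity and $\alpha$-injectivity constraints must be used carefully to rule out destabilising middle subspaces; if the naive symmetric $\lambda$ fails on one of these, I would have to adjust the weight within the admissible cone or invoke the global injectivity/surjectivity to exclude that dimension vector from occurring at all.
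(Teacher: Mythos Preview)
Your plan is the same as the paper's: reduce to King's theorem by showing every representation $R=(1,2k+2,1)$ in $\mathcal{G}^{gis}_{k,1}$ is $\lambda$-semistable for a fixed $\lambda$. Two concrete points need correction. First, your candidate $\lambda=(2k+1,-1,2k+1)$ fails the normalisation: $\langle(2k+1,-1,2k+1),(1,2k+2,1)\rangle=2k\neq0$, and no rescaling fixes this. The paper takes $\lambda=(-1,0,1)$ instead; with this choice the inequalities become $\langle\lambda,(a',b',c')\rangle=c'-a'\ge0$, which is much easier to check than anything symmetric.

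Second, the enumeration of subrepresentations is where the real content lies, and the tool you are missing is the numerical existence criterion for monads (Theorem~\ref{maintheorem}). The paper argues that the quotient $R''$ inherits global surjectivity, so the induced complex on $X$ is still a monad-type object and must obey the bounds of Theorem~\ref{maintheorem}; from this one extracts that when $a'=0$ the middle dimension satisfies $b'\ge 2k+1$, and in each remaining case one gets $c'=1$ (otherwise the quotient would require a surjection $\mathcal{O}_X^{b''}\to L$ with $b''$ too small). This is exactly what eliminates the destabilising candidates you were worried about, including the purely middle subrepresentations $(0,b',0)$ and the dangerous $(1,b',0)$. Your instinct to use global injectivity/surjectivity to exclude bad dimension vectors is right, but it needs to be combined with the monad inequalities rather than handled by a clever choice of $\lambda$.
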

\begin{proof}
Let $R=(1,2k+2,1)$ be a representation in $\mathcal{G}^{gis}_{k,1}$, let $R'=(a',b',c')$ be any subrepresentation of $R$, and let $R''=(a'',b'',c'')$ be the corresponding quotient representation. Then, we have a diagram
\begin{equation*}
\xymatrix@C=3.5em{
0\ar[r]&\bullet\ar[r]^<{a'}\ar[d]&\bullet\ar[r]^<{b'}\ar[d]&\bullet\ar@{}[r]^<{c'}\ar[d]&\\
0\ar[r]&\bullet\ar[r]^<{1}\ar[d]&\bullet\ar[r]^(.3){2k+2}\ar[d]&\bullet\ar[r]^<{1}\ar[d]&0\\
&\bullet\ar[r]^(.17){a''}&\bullet\ar[r]^(.17){b''}&\bullet\ar[r]^(.17){c''}&0
}
\end{equation*}
(the fact that $R$ is $(\sigma,\gamma)$-globally injective and surjective implies that $R'$ is still injective, though not necessarily surjective, and that the quotient representation $R''$ preserves surjectivity).

$R$ is $\lambda$-semistable if we can find $\lambda=(\lambda_1,\lambda_2,\lambda_3)\in\ZZ^3$ such that
\[
\langle(\lambda_1,\lambda_2,\lambda_3),(1,2k+2,1)\rangle= \lambda_1+(2k+2)\lambda_2+\lambda_3=0
\]
and
\[
\langle(\lambda_1,\lambda_2,\lambda_3),(a',b',c')\rangle\geq 0.
\]
It is immediate from the diagram that either $a'=0$ or $a'=1$.

Suppose first that $a'=0$.
Then $b'$ must be either $2k+1$ or $2k+2$ since the cohomology bundle $E'$ of the monad that corresponds to $R'$ has $\rk(E')=b'-a'\geq \dim X=2k+1$.

When $a'=0$ and $b'=2k+1$, we see that $b''=1$, $c''=0$ and hence $c'=1$. So, $R$ is $\lambda$-semistable if
\[
\langle(\lambda_1,\lambda_2,\lambda_3),(0,2k+1,1)\rangle=(2k+1)\lambda_2+\lambda_3> 0.
\]

When $a'=0$ and $b'=2k+2$, we see again that $b''=c''=0$ and so $c'=1$. The $\lambda$-semistability of $R$ implies
\[
\langle(\lambda_1,\lambda_2,\lambda_3),(0,2k+2,1)\rangle=(2k+2)\lambda_2+\lambda_3> 0.
\]

Now suppose $a'=1$. In this case $b'=2k+2$, so that $b''=c''=0$ and thus $c'=1$, that is $R'=R$ and we must have
\[
\lambda_1+(2k+2)\lambda_2+\lambda_3=0.
\]

Hence, we can choose the triple $\lambda=(-1,0,1)$ satisfying all the required inequalities in order to $R$ to be $\lambda$-semistable.

The irreducibility statement follows from Theorem \ref{theorem2} and the general setting above described.
\end{proof}

The following is a consequence of Theorem \ref{theorem2} and Theorem \ref{modulispace}.

\begin{corollary}\label{irreduciblemodulispace}
Let $\mathcal{V}_{2k,1}$ be the set of rank $2k$ vector bundles which are the cohomology of a monad of type
\begin{equation*}
\xymatrix@R=\linhaxylabels@C=\colunaxylabels{0\ar[r]&\opn(-1)\ar[r]^-{f}&\OO_\pn^{\,2k+2}\ar[r]^-{g}&\opn(1)\ar[r]&0.}
\end{equation*}
Then the coarse moduli space $\mathcal{M}(\mathcal{V}_{2k,1})$ of $\lambda$-semistable vector bundles in $\mathcal{V}_{2k,1}$ is irreducible.
\end{corollary}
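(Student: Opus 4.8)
The plan is to realise the moduli space $\mathcal{M}(\mathcal{V}_{2k,1})$ as the image of an irreducible parameter space under a surjective morphism, so that its irreducibility follows from the standard fact that the image of an irreducible variety under a morphism is again irreducible. All the substantial ingredients are already in place: Theorem \ref{theorem2} supplies the irreducible parameter space, and Theorem \ref{modulispace} supplies the moduli space as a GIT quotient of the associated representation variety.

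First I would fix the numerical data of the monad \eqref{bundlemonad1} in the case at hand: here $X=\pn$ with $n=2k+1$, the line bundle is $L=\opn(1)$ (so $N=n$ and $h^0(L)=2k+2$), and $a=c=1$, $b=2k+2$. These values satisfy condition \eqref{monadFlo1} of Theorem \ref{TeoFlo}, since $b=2k+2\ge 2=a+c$ and $2c+N-1=2k+2=b$, and moreover $c=1\le 2$, so Theorem \ref{theorem2} applies. Hence the set
\[
\mathcal{N}\subseteq\Hom\big(\opn(-1),\OO_\pn^{\,2k+2}\big)\times\Hom\big(\OO_\pn^{\,2k+2},\opn(1)\big)
\]
of pairs $(f,g)$ yielding a monad of type \eqref{bundlemonad1} is an irreducible algebraic variety. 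I would then pass to the locally free locus: the condition that the cohomology be locally free of rank $2k$ is open on $\mathcal{N}$, and it is non-empty by the converse part of Theorem \ref{lowerrankth} (the embedding of $\pn$ by $\opn(1)$ being ACM). Thus the locus $\mathcal{N}^{\mathrm{lf}}\subseteq\mathcal{N}$ parametrising pairs whose cohomology lies in $\mathcal{V}_{2k,1}$ is a non-empty open subset of an irreducible variety, hence irreducible.

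It then remains to produce a surjective morphism from $\mathcal{N}^{\mathrm{lf}}$ onto $\mathcal{M}(\mathcal{V}_{2k,1})$. Through the equivalence between the category $\mathcal{M}_{k,1}$ of monads of type \eqref{bundlemonad1} and the category $\mathcal{G}^{gis}_{k,1}$ of quiver representations (\cite{JP15}), together with Remark \ref{mon-iso-vb}, assigning to a pair $(f,g)$ the cohomology of its monad corresponds to the classifying map of the tautological family of $\lambda$-semistable representations over $\mathcal{N}^{\mathrm{lf}}$. Since every representation $R=(1,2k+2,1)$ in $\mathcal{G}^{gis}_{k,1}$ is $\lambda$-semistable for $\lambda=(-1,0,1)$—exactly the verification carried out in the proof of Theorem \ref{modulispace}—King's construction (\cite{Kin94}, Theorem 4.1) realises $\mathcal{M}(\mathcal{V}_{2k,1})$ as a GIT quotient of the entire representation variety, with quotient map defined on all of $\mathcal{N}^{\mathrm{lf}}$. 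Surjectivity is immediate: by definition every bundle in $\mathcal{V}_{2k,1}$ is the cohomology of some monad of type \eqref{bundlemonad1}, hence comes from some pair in $\mathcal{N}^{\mathrm{lf}}$.

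Since $\mathcal{M}(\mathcal{V}_{2k,1})$ is thus the image of the irreducible variety $\mathcal{N}^{\mathrm{lf}}$ under a morphism, it is irreducible. The one delicate point, and the step I expect to require the most care, is the assertion that passing from a pair $(f,g)$ to the $S$-equivalence class of its cohomology bundle is genuinely a morphism onto the moduli space, rather than merely a set-theoretic bijection; here one must invoke the universal (categorical quotient) property of King's GIT quotient and the categorical equivalence of \cite{JP15} to guarantee that the classifying map descends to a morphism. Once this is granted, everything else is a routine transfer of irreducibility along a surjective morphism.
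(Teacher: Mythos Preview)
Your proposal is correct and follows essentially the same approach as the paper, which simply states that the corollary is a consequence of Theorem~\ref{theorem2} and Theorem~\ref{modulispace} without further elaboration. Your write-up is in fact considerably more detailed than what appears in the paper: you spell out the passage to the locally-free locus, the use of the categorical equivalence with quiver representations, and the r\^ole of King's GIT quotient in producing the classifying morphism, whereas the paper leaves all of this implicit.
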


Naturally, irreducibility of the moduli space will be guaranteed in each case where we get an irreducible family, as mentioned in the general setting described after Theorem \ref{theorem2}.

\begin{Remark}
When $c=2$ an analogous study leads us to the conclusion that there is no $\lambda$ such that a representation $R=(2,2k+2,2)$ is $\lambda$-semistable. Therefore, in this case we are not able to construct the moduli space $\mathcal{M}(\mathcal{V}_{2k,2})$ with the help of King's construction.
\end{Remark}

\providecommand{\bysame}{\leavevmode\hbox to3em{\hrulefill}\thinspace}
\providecommand{\MR}{\relax\ifhmode\unskip\space\fi MR }
\providecommand{\MRhref}[2]{%
  \href{http://www.ams.org/mathscinet-getitem?mr=#1}{#2}
}
\providecommand{\href}[2]{#2}

\end{document}